\documentclass[12pt,reqno]{amsart}
\usepackage[margin=1.2in]{geometry}
\usepackage{amsmath,amssymb,amsfonts,amsthm}
\usepackage{mathrsfs}
\usepackage{marginnote}
\usepackage{enumitem}
\usepackage{booktabs}
\usepackage{graphicx}
\usepackage{url}
\usepackage{xcolor}
\usepackage{verbatim}
\usepackage[colorlinks=true,linkcolor=blue,citecolor=red,filecolor=blue,urlcolor=cyan]{hyperref}
\usepackage{tikz}
\usetikzlibrary{arrows.meta,calc,angles,quotes}

\numberwithin{equation}{section}

\newtheorem{theorem}{Theorem}[section]
\newtheorem{lemma}[theorem]{Lemma}
\newtheorem{proposition}[theorem]{Proposition}
\newtheorem{example}[theorem]{Example}

\newtheorem{remark}[theorem]{Remark}

\newenvironment{nouppercase}{%
	\renewcommand{\uppercasenonmath}[1]{}}{}

\def\geq{\geqslant}
\def\leq{\leqslant}
\def\ge{\geqslant}
\def\le{\leqslant}
\newcommand{\dd}{\,\mathrm{d}}
\newcommand{\E}{\mathbb{E}}
\newcommand{\R}{\mathbb{R}}

\DeclareMathOperator{\Ent}{Ent}
\DeclareMathOperator{\Var}{Var}
\DeclareMathOperator{\Cov}{Cov}
\DeclareMathOperator{\tr}{tr}
\allowdisplaybreaks

\begin{document}
\title[Quantitative bounds for high-dimensional entropic CLT]{\LARGE Quantitative bounds for high dimensional entropic CLT}

\author[C.-S.~Deng]{Chang-Song Deng}
\address[C.-S.~Deng]{School of Mathematics and Statistics\\ Wuhan University\\ Wuhan 430072, China}
\email{dengcs@whu.edu.cn}

\author[L.~Wang]{Lin Wang}
\address[L.~Wang]{School of Mathematical Sciences, University of Chinese Academy of Sciences, Beijing 100049,
China; Academy of Mathematics and Systems Science, Chinese Academy of Sciences, Beijing 100190, China}
\email{wanglin2021@amss.ac.cn}

\author[L.~Xu]{Lihu Xu}
\address[L.~Xu]{Lihu Xu, 1. Department of Statistics and Probability, Michigan State University, East Lansing, MI 48824, USA; 2. Department of Mathematics,  Faculty of Science and Technology,  University of Macau,  Av. Padre Tom\'as Pereira,  Taipa,  Macau,  China}
\email{xulihu@msu.edu, lihuxu@um.edu.mo}

	\begin{abstract}
	By extending the Johnson--Barron projection method from one dimension to high dimensions and utilizing a Wang type dimension-free Harnack inequality, we obtain a new quantitative bound for the entropic central limit theorem under the assumption that the Poincar\'e inequality holds. We compare our results with recent developments to demonstrate the merits of our approach.	
	\end{abstract}
	
	\subjclass[2020]{60F05, 94A17}
	\keywords{Entropic central limit theorem, Fisher information, Convergence rate, Johnson–Barron projection method, Wang type dimension-free Harnack inequality}
	\date{\today}

\begin{nouppercase}
	\maketitle
\end{nouppercase}
	\tableofcontents

	\section{Introduction}

Throughout this paper, we denote by $p_X$ the density (if it exists) of a random variable $X$ on $\R^d$.
Let $(X_i)_{i\geq 1}$ be a sequence of  independent and identically distributed (i.i.d.) $\R^d$-valued random vectors with $\E X_i=0$ and $\Cov(X_i)=I_d$, and set
\[
Z_n:=\frac{1}{\sqrt n}\sum_{i=1}^n X_i,
\qquad
Z\sim\mathcal N(0,I_d).
\]
Here and in what follows, $I_d$ denotes the $d\times d$ identity matrix. When $\E X_i=\mu \in \R^d$ and $\Cov(X_i)=\Sigma$ is invertible, we define $\tilde X_i=\Sigma^{-\frac 12}(X_i-\mu)$ so that $\tilde X_i$ satisfies the conditions above.

 By the classical central limit theorem (CLT), $Z_n$ converges weakly to $Z$ as $n\to\infty$. In this work, we study the entropic CLT, namely, the decay of the relative entropy of $Z_n$ with respect to $Z$.  Recall that the relative entropy of a random variable $X$ on $\mathbb{R}^d$ with respect to $Z$ is defined by
\[
\Ent(X\mid Z):=\int_{\mathbb{R}^d} p_X(x)\log\frac{p_X(x)}{p_Z(x)}\dd x,
\] 
whenever the density $p_X$ exists; otherwise, we adopt the convention that $\Ent(X\mid Z)=\infty$. We aim to establish quantitative bounds for $\Ent(Z_n\mid Z)$ with explicit dependence on both $n$ and the dimension $d$.

In the one-dimensional case, quantitative bounds of order $O(n^{-1})$ for the relative entropy were obtained in \cite{ABBN04,JB04} under a spectral gap (Poincar\'e inequality) assumption. Recall that the Poincar\'e constant of an $\mathbb{R}^d$-valued random 
variable $X$ is defined as 
$$
C_P(X)=\sup\frac{\operatorname{Var}(f(X))}{\mathbb{E}\left[|\nabla f(X)|^2\right]},
$$
where the supremum is taken over all locally Lipschitz functions $f:\mathbb{R}^d\to\mathbb{R}$. See Subsection \ref{WangHI} below for more details on the Poincar\'e (or spectral gap) inequality.

In the multi-dimension case, several quantitative results were recently developed. 
Bobkov, Chistyakov and G\"otze \cite{BCG13} derive Edgeworth-type asymptotic expansions for the entropic distance $\Ent(Z_n\mid Z)$. For fixed dimension $d$, they established the optimal rate of $O(n^{-1})$ under the finite fourth moment assumption, to the best of our knowledge, this condition is the best known one to guarantee the $O(n^{-1})$ rate.   

 In the recent work \cite{EMZ20}, Eldan, Mikulincer and Zhai introduced a novel martingale embedding (Skorokhod embedding) construction  to obtain high‑dimensional CLTs. They assumed that $\E X_1=0$, $\Cov(X_1)=I_d$ and $X_1$ is log‑concave, i.e., the density $p_{X_1}$ of $X_1$ satisfies 
that $\nabla^2\log p_{X_1}(x)$ is non-positive definite 
for all $x\in\mathbb{R}^d$. Under these conditions, they established the non‑asymptotic convergence rate in relative entropy (\cite[Theorem 6]{EMZ20}):
\begin{equation}\label{entro66}
	\Ent(Z_n \mid Z)
	\le C\frac{d^{10}\bigl(1 + \Ent(X_1\mid Z)\bigr)}{n},
\end{equation}
 where $C>0$ is a universal constant independent of $n$ and $d$. Since for a log‑concave random variable $X_1$ on $\R^d$ one has $\Ent(X_1\mid Z)=O(d)$ (see, e.g., \cite{{KL25}}), the overall bound is of order $O(d^{11}/n)$, i.e., it exhibits a high‑degree polynomial dependence on the dimension. The approach in \cite{EMZ20} is based on the entropy minimizing process (\cite{Fol84}, \cite{Fol86}) and the stochastic localization (\cite{Eld13}), which allow them to control the fluctuations of the quadratic variation process, and they apply Girsanov’s theorem to translate these fluctuations into entropy bounds. However, the dependence on the dimension may be far from optimal in the general log‑concave case. If, furthermore, $X_1$ is strongly log-concave (i.e., there exists $c>0$ such that $-\nabla^2\log p_{X_1}(x)-c I_d$ is non-negative definite 
for all $x\in\mathbb{R}^d$), they present the rate of order $O(d/n)$ in \cite[Theorem 7]{EMZ20}.

In the present work, we exploit the entropy dissipation along the Ornstein--Uhlenbeck flow, combined with relative Fisher information estimates and functional inequalities, to establish explicit high-dimensional convergence rates for $\Ent(Z_n\mid Z)$ under the assumption of a Poincar\'e inequality. In particular, for certain classes of distributions such as log-concave measures, our results yield sharper rates than previously known rates in the high-dimensional regime, see \eqref{eq-log-concave} below for the detail. 

Throughout this paper, we denote by $C$ a positive constant independent of $d$ and $n$ that may vary from line to line.

	\subsection{Main result and comparison with existing bounds}
For an $\mathbb{R}^d$-valued random vector $X$  satisfying $\mathbb{E}X=0$ and $\Cov(X)=I_d$, the relative Fisher information of $X$ with respect to the distribution of the $d$-dimensional standard Gaussian random variable $Z$ is
\begin{equation} \label{def-reFI}
J(X):=
\int_{\mathbb{R}^d}
\bigl|\nabla\log p_X(x)-\nabla\log p_Z(x)\bigr|^2p_X(x)\dd x.
\end{equation}

	\begin{theorem}\label{thm-1}
		Let $X_1,X_2,\dots,X_n$ be i.i.d.\ $\R^d$-valued random vectors with absolutely continuous density,
		$\E X_1=0$, $\Cov(X_1)=I_d$, and Poincar\'e constant $C_P(X_1)=R$. 
		Let $Z_n=\frac1{\sqrt n}\sum_{i=1}^n X_i$ and $Z\sim\mathcal N(0,I_d)$. Then for all $n\geq1$,
		\[
		\Ent(Z_n\mid Z)\leq \min\left\{\frac{2dR}{2dR+(n-1)}\Ent(X_1\mid Z), \,\sqrt{\frac{d(R-1)}{n}\cdot
		\frac{2dR}{2dR+(n-1)}J(X_1)}\right\}.
		\]
	\end{theorem}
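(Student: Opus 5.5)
The plan is to run the Johnson--Barron projection argument in $\R^d$ along the Ornstein--Uhlenbeck flow. For a random vector $X$ with $\E X=0$ and $\Cov(X)=I_d$, set
\[
X^{(t)}=e^{-t}X+\sqrt{1-e^{-2t}}\,Z',
\]
with $Z'$ an independent standard Gaussian. De Bruijn's identity then gives
\[
\Ent(X\mid Z)=\int_0^\infty J(X^{(t)})\dd t .
\]
The first bound will follow from the Fisher information inequality
\begin{equation}\label{plan-JB}
J(Z_n)\le \frac{2dR}{2dR+(n-1)}\,J(X_1).
\end{equation}
We apply \eqref{plan-JB} to $X_i^{(t)}$ in place of $X_i$. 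This is legitimate because $Z_n^{(t)}=\frac1{\sqrt n}\sum_i X_i^{(t)}$ in law, and because $C_P(X_1^{(t)})\le e^{-2t}R+(1-e^{-2t})\le \max\{R,1\}=R$. The last equality uses $R\ge 1$, which holds since $\Cov(X_1)=I_d$ (test with linear $f$). Integrating in $t$ then gives the first term of the minimum.

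\textbf{Proving \eqref{plan-JB}.} Write $\rho_i=\nabla\log p_{X_i}+x$ for the relative score of $X_i$. The relative score of $Z_n$ is the conditional expectation
\[
\rho_{Z_n}(Z_n)=\E\Big[\tfrac1{\sqrt n}\textstyle\sum_i\rho_i(X_i)\,\Big|\,Z_n\Big].
\]
Hence $J(Z_n)$ is the squared norm of a projection of $U=\frac1{\sqrt n}\sum_i\rho_i(X_i)$, whose squared norm is $\E|U|^2=J(X_1)$. Following Johnson--Barron, for any functions $g_i:\R^d\to\R^d$ we have
\[
\E\bigl|\rho_{Z_n}-U\bigr|^2\ \ge\ \text{something measured against the additive approximation } \textstyle\sum_i g_i(X_i).
\]
Concretely, one shows $\E|U-\rho_{Z_n}|^2\ge \frac{n-1}{n}\cdot\frac1{\,\cdots\,}\E|\rho_1-\ell|^2$-type bounds, where the projection of $\rho_{Z_n}(Z_n)$ onto additive functions is $\sum_i w(X_i)$ with $w=\E[\rho_{Z_n}(Z_n)\mid X_1=\cdot\,]$.

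The key multidimensional step is to bound how far $\rho_1$ is from being linear. The relevant identity is $\E[\rho_1(X_1)\otimes X_1]=0$: integrating by parts, $\E[\nabla\log p\otimes x]=-I_d$, which cancels $\Cov=I_d$. Because of this identity, the Gaussian (linear) part of $\rho_1$ vanishes. The Poincar\'e inequality applied coordinatewise, in its dual form, then gives
\[
\E|\rho_1(X_1)|^2 \le C\,d\,R\,\E\|\nabla w\|^2\text{-type control},
\]
and the factor $d$ arises from summing over coordinates and using $\|\cdot\|_{HS}^2\le d\|\cdot\|^2_{op}$. Combining this with the projection Pythagoras identity should give
\[
J(X_1)-J(Z_n)\ge \frac{n-1}{2dR}\,J(Z_n),
\]
which rearranges to \eqref{plan-JB}. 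This step is the main obstacle. I would first try to follow the one-dimensional Johnson--Barron proof coordinate by coordinate, tracking where $d$ enters, and then check that the constant comes out exactly as $2dR$.

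\textbf{Second bound.} Here the plan is an interpolation: relative entropy is bounded by the square root of (a Fisher-type quantity) times (a Poincar\'e/$W_2$-type quantity). The Fisher-type factor is $J(Z_n)\le\frac{2dR}{2dR+n-1}J(X_1)$ from \eqref{plan-JB}. The other factor should be $d(R-1)/n$, which is exactly the size of $C_P(Z_n)-1$, since
\[
C_P(Z_n)\le 1+\frac{R-1}{n}
\]
in the appropriate sense for the Gaussian part. To connect these I would use the HWI inequality, $\Ent(Z_n\mid Z)\le W_2(Z_n,Z)\sqrt{J(Z_n)}$, together with a bound $W_2^2(Z_n,Z)\le d(R-1)/n$. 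To get that bound, I would derive it from a Wang-type dimension-free Harnack inequality for the OU semigroup, combined with a Stein-kernel estimate: a Poincar\'e constant $R$ yields a Stein kernel $\tau$ with $\E\|\tau-I_d\|_{HS}^2\le d(R-1)$, and averaging over $n$ copies divides this by $n$. The Harnack inequality is what converts this into a $W_2$ (or entropy) bound.
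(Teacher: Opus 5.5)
Your outer structure matches the paper. You use de Bruijn along the OU flow, apply the Fisher information inequality to the smoothed vectors $X_i^{(t)}$ with $C_P(X_1^{(t)})\le R$ (using $R\ge 1$), and for the second term use HWI plus $W_2^2(Z_n,Z)\le d(R-1)/n$. The genuine gap is the inequality $J(Z_n)\le \frac{2dR}{2dR+(n-1)}J(X_1)$, on which both terms rest. You do not prove it, and the ingredients you sketch are not the ones that make it work.

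The function whose deviation from linearity matters is not $\rho_1$. It is $g(u)=\sqrt n\,\E[\rho_{Z_n}(Z_n)\mid X_1=u]$, the additive component of the ridge score. The Poincar\'e inequality is used in primal form on $\varphi_j(u)=g^{(j)}(u)-\E[\nabla g^{(j)}(X_1)]\cdot u$. This gives $\E|\nabla g(X_1)-M|^2\ge R^{-1}\E|g(X_1)-MX_1|^2$ with $M=\E[\nabla g(X_1)]=-I_M(Z_n)$.

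What is then needed is a \emph{lower} bound, in terms of $\E|\nabla g(X_1)-M|^2$, on the error $\E|V-\widehat V|^2$ of projecting the ridge field $V$ onto additive functions. An upper bound on $J(X_1)$, such as your $\E|\rho_1(X_1)|^2\le CdR\,\E\|\nabla w\|^2$, points the wrong way. The paper gets the lower bound in three steps:
\begin{enumerate}
\item the martingale telescoping $\E|V-\widehat V|^2=\sum_{i=2}^n\Delta_i$;
\item a Cauchy--Schwarz bound of each $\Delta_i$ against $\sum_{k<i}\rho_{X_k}(X_k)$, which uses full scores, so $I(X_1)=J(X_1)+d$ enters rather than $J(X_1)$;
\item the score-shift lemma, which identifies the resulting function as $-\frac{i-1}{\sqrt n}\bigl(\nabla g^{(j)}-\E[\nabla g^{(j)}(X_1)]\bigr)$.
\end{enumerate}
The sum $\sum_{i=2}^n(i-1)=n(n-1)/2$ is what produces the factor $2$ in $2dR$.

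The factor $d$ also does not come from $\|A\|_{HS}^2\le d\|A\|_{op}^2$, which is again an upper bound. Using $\E[X_1g(X_1)^\top]=\E[Z_n\rho_{Z_n}(Z_n)^\top]=-I_d$, one gets the orthogonal split $\E|g(X_1)-MX_1|^2=\E|g(X_1)+X_1|^2+\|M+I_d\|_F^2$. The first term equals $\E|\widehat V|^2\ge J(Z_n)^2/J(X_1)$, by Cauchy--Schwarz with $\langle U,\widehat V\rangle=J(Z_n)$. The second is at least $(\tr(M+I_d))^2/d=J(Z_n)^2/d$. The exact constant then comes from the identity $\frac{1}{I(X_1)}\bigl(\frac{1}{J(X_1)}+\frac1d\bigr)=\frac{1}{dJ(X_1)}$. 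Pythagoras, $\E|V-\widehat V|^2\le J(Z_n)-J(Z_n)^2/J(X_1)$, closes the argument and gives $J(X_1)-J(Z_n)\ge\frac{n-1}{2dR}J(Z_n)$.

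None of this is in your plan. Running the one-dimensional proof on the marginals would not substitute for it: that only controls marginal Fisher informations, which do not determine $J(Z_n)$ when the coordinates of $X_1$ are dependent.

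Two smaller points:
\begin{itemize}
\item The bound $W_2^2(Z_n,Z)\le d(R-1)/n$ follows from the Courtade--Fathi--Pananjady Stein kernel estimate because $W_2$ is bounded by the Stein discrepancy. It does not come through the Harnack inequality; in the paper, the log-Harnack (entropy-cost) inequality is what yields HWI.
\item Your claim $C_P(Z_n)\le 1+(R-1)/n$ is unsupported, since scaling and subadditivity give only $C_P(Z_n)\le R$. You never use it, though.
\end{itemize}
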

	\begin{remark}\label{comparem}
    We make three comments to compare our Theorem \ref{thm-1}  with existing results.
    \begin{enumerate}[label={\normalfont(\arabic*)}]
        \item   If $\Ent (X_1\mid Z)<\infty$ or $J(X_1)<\infty$, 
        Theorem \ref{thm-1} presents a rate of $O(n^{-1})$ for fixed $d$, which coincides with the optimal rate established in \cite[Theorem 6.1]{BCG13}. However, as noted in \cite{EMZ20}, the estimates in \cite{BCG13} are not explicit in $d$,  and the dependence there seems to be exponential in the dimension $d$.
        By contrast, our result implies an explicit polynomial dependence on the dimension under the log-concavity assumption, see \ref{compadfh3} below.
          
        \item If we further assume that  $X_1$ is  log-concave, then according to \cite[Equation (1.4) and Theorem 1.2]{Kla23}, the Poincar\'e constant of $X_1$ satisfies
	\[
	R=C_P(X_1)\le C\log (d+1).
	\]
	This, together with Theorem \ref{thm-1}, implies that
	\begin{equation*}
\begin{split}
	\Ent(Z_n\mid Z)
	&\le \frac{2dR}{2dR+(n-1)}\Ent(X_1\mid Z)\\
	&\le \frac{2dC\log (d+1)}{2dC\log (d+1)+(n-1)}\Ent(X_1\mid Z),
	\end{split}
    \end{equation*}
	which can be rewritten as
	\begin{equation}\label{eq-log-concave}
	    \Ent(Z_n\mid Z)\le
	C\frac{d\log (d+1)}{n+d\log (d+1)}\Ent(X_1\mid Z).
	\end{equation}
	This provides a better dependence on the dimension $d$ than the result \eqref{entro66} in  \cite[Theorem 6]{EMZ20} under the same assumptions.

\item\label{compadfh3} We still assume that  $X_1$ is  log-concave. Then, from \cite[Equation (3) and Theorem 1.2]{KL25}, it holds that $\Ent(X_1\mid Z)=O(d)$. Combining this with \eqref{eq-log-concave} yields an upper bound of order $O(1/n)$ for fixed $d$ and $O(d)$ for fixed $n$. This is consistent with the upper bound $O(d/n)$ as in \cite[Theorem 7]{EMZ20}, but without requiring the stronger condition that $X_1$ is strongly log-concave.
\item Another non‑asymptotic high‑dimensional bounds have been obtained under functional Poincar\'e inequalities and the Stein kernels argument (\cite{CFP19}):
\begin{equation}\label{steinentro77}
\Ent(Z_n \mid Z)
\le\frac{d(R-1)}{2n}
\log\left(1 + \frac{ J(X_1)n}{(R-1)d}\right),
\end{equation}
 where $R=C_P(X_1)$ is the Poincar\'e constant and $J(X_1)$ is the relative Fisher information of $X_1$ with respect to the standard Gaussian measure (see \eqref{def-reFI} below for the precise definition). When $J(X_1)$ is finite, the upper bound for $\Ent(Z_n \mid Z)$ involves an additional $\log n$ factor, leading to a convergence rate of $O(n^{-1}\log n)$ for fixed $d$. 
Two crucial tools in \cite{CFP19} are the so-called HSI inequality (cf. \cite[(1.3)]{LNP15}) and bounds for the Stein discrepancy derived from the Poincar\'e inequality.
\end{enumerate}
\end{remark}

\subsection{Proof strategy}

The proof of Theorem \ref{thm-1} is based on a decomposition of the entropy along the Ornstein--Uhlenbeck (OU) semigroup, with a time parameter $t>0$ that will later be chosen so as to balance the two terms and optimize the bound.

Let $Z\sim\mathcal N(0,I_d)$ and  $X$ be an independent random vector on $\R^d$ with $\mathbb{E}X=0$, $\operatorname{Cov}(X)=I_d$, and $\Ent(X\mid Z)<\infty$. For $t\ge0$, we define
\[
X(t):=e^{-t}X+\sqrt{1-e^{-2t}}Z.
\]
One has the entropy dissipation identity (see Lemma \ref{lem:debruijn} below)
\begin{equation}\label{eq:entropy-dissipation}
	\Ent(X\mid Z)=\Ent(X(t)\mid Z)
	+\int_0^t J(X(s))\dd s,
\end{equation}
so that $\Ent(X\mid Z)$ can be decomposed into a ``late-time'' entropy
$\Ent(X(t)\mid Z)$ and an integral of the relative Fisher information $J(X(s))$ over $[0,t]$.

To bound $\Ent(X(t)\mid Z)$, we first note that the distribution of $X(t)$ coincides with that of $X_t$, which solves the stochastic differential equation
$$
\dd X_t=-X_t\dd t+\sqrt2\dd B_t,\qquad X_0=X,
$$
where $B_t$ is a standard Brownian motion on $\R^d$ independent of $X$. Then it follows from the entropy-cost inequality \eqref{entro112} below (derived from the Wang type dimension free log-Harnack inequaility) that, for all $t>0$,
\begin{equation}\label{eq:Ent-OU-W2-intro}
	\Ent(X(t)\mid Z)\le \frac{1}{2(e^{2t}-1)}W_2^2(X, Z).
\end{equation}
Here and in the following, $W_2(X, Z)$ denotes the Wasserstein-2 distance between the distributions of $X$ and $Z$.
On the other hand, the accumulated relative Fisher information term $\int_0^t J(X(s))\dd s$ is controlled by the exponential decay of relative Fisher information along the OU flow (see Lemma \ref{lem:decay-Jt} below):
\begin{equation}\label{eq:FI-decay-OU}
	J(X(t))\le e^{-2t}J(X),\quad t\ge0.
\end{equation}
Substituting \eqref{eq:Ent-OU-W2-intro} and \eqref{eq:FI-decay-OU} into \eqref{eq:entropy-dissipation} yields, for every $t>0$,
\begin{equation}\label{eq:Ent-W2-FI-intro}
	\Ent(X\mid Z)
	\le \frac{1}{2(e^{2t}-1)}W_2^2(X,Z)
	+ \frac{1-e^{-2t}}{2}J(X).
\end{equation}
In the proof of Theorem \ref{thm-1}, we apply \eqref{eq:Ent-W2-FI-intro} with
$X=Z_n$, and choose $t$ as a function of $W_2(Z_n,Z)$ and $J(Z_n)$ to balance the two terms on the right hand side of \eqref{eq:Ent-W2-FI-intro}.
The choice $t=\infty$ formally removes the first term and recovers the
Gaussian log-Sobolev inequality
\[
\Ent(X\mid Z)\le \frac12 J(X),
\]
while the choices of $t<\infty$ allow us to exploit corresponding bounds on $W_2(Z_n, Z)$.
Finally, to complete the quantitative relative entropy estimate for $Z_n$, we require an explicit bound on $J(Z_n)$ in the high dimension case, which is proved in Proposition \ref{Prop-FI} by extending the Johnson-Barron projection method from one dimension to high dimensions.

	\subsection{Organization of the paper} The rest of this paper is organized as follows. In Section \ref{sec-pre}, we collect the preliminaries used throughout the paper. Subsection \ref{susec-FI} recalls basic facts on the score function, Fisher information, relative Fisher information, and the de Bruijn identity. Subsection \ref{WangHI} presents the functional inequalities used later, including the Wang type dimension free log-Harnack inequality for the OU semigroup \cite{RW10}, the associated entropy-cost inequality, and basic properties of the Poincar\'e inequality. Subsection \ref{subsec-space} introduces the ridge and additive subspaces together with the projection identities.
Section \ref{sec-main} is devoted to the proof of the main results. In Subsections \ref{subsec-lowerbound} and  \ref{subsec-FI}, we derive the basic lower bounds arising from the projection error decomposition and develop a high-dimensional extension of the Johnson--Barron projection method \cite{JB04}, which yields the key estimate for the decay of $J(Z_n)$. Finally, Subsection \ref{subsec-ent} combines this estimate with the entropy dissipation along the OU semigroup and the Wasserstein-2 bound to complete the proof of Theorem \ref{thm-1}.

	\section{Preliminaries}\label{sec-pre}

\subsection{Fisher information, relative Fisher information, and de Bruijn identity}\label{susec-FI}

Let $X$ be an $\R^d$-valued random vector with  density $p_X$ with respect
to the Lebesgue measure. Assume $\E X=0$ and $\E[|X|^2]<\infty$, and let
\[
\Sigma=\Cov(X)=\E[XX^\top]
\]
be positive definite.

Throughout this paper, the (vector) \emph{score function} of a random variable $X$ is defined by
\[
\rho_X(x):=\nabla\log p_X(x),\qquad x\in\R^d.
\]
For any $c\neq 0$, the score function satisfies the scaling property
\[
\rho_{cX}(x)=c^{-1}\rho_X(x/c),\quad x\in\mathbb R^d.
\]
The \emph{Fisher information} and the \emph{Fisher information matrix} of $X$ are
\begin{equation} \label{e:FIMatrix}
I(X):=\E\big[|\rho_X(X)|^2\big],
\qquad
I_M(X):=\E\bigl[\rho_X(X)\rho_X(X)^\top\bigr].
\end{equation}

We first state some properties of the score function.
\begin{lemma}[{\cite[Lemma A.1]{JB04}}]\label{lem:score-shift}
	Let $X$ be as above and assume that $I(X)<\infty$.
	For an open set $U\subset\R^d$, let $f:\R^d\to\R$ be a measurable function such
	that
	\[
	\sup_{u\in U}\E\bigl[f(u+X)^2\bigr]<\infty.
	\]
	Define
	\[g(u):=\E\bigl[f(u+X)\bigr],\qquad u\in U.
	\]
	Then $g$ is absolutely continuous on $U$, and for almost every
	$u\in U$, one has
	\begin{equation}\label{eq:score-shift-derivative}
		\nabla g(u)= -\E\bigl[f(u+X)\rho_X(X)\bigr].
	\end{equation}
\end{lemma}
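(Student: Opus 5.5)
The plan is to reduce the statement to the scalar case $d=1$, which is the one-dimensional result of Johnson and Barron \cite[Lemma A.1]{JB04}. We fix a unit direction $e\in\R^d$ and apply that lemma to the function $u\mapsto g(u)$ restricted to the line $u_0+se$. Equivalently, we prove the directional identity
\[
\partial_e g(u)= -\E\bigl[f(u+X)\,\langle \rho_X(X),e\rangle\bigr]
\]
for almost every $u\in U$. Doing this for the coordinate directions $e_1,\dots,e_d$ then gives the vector identity \eqref{eq:score-shift-derivative}.

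The core computation is a change of variables $y=u+x$, which gives $g(u)=\int f(y)\,p_X(y-u)\dd y$. We want to differentiate under the integral sign, since $\nabla_u p_X(y-u)=-\nabla p_X(y-u)=-\rho_X(y-u)p_X(y-u)$; this produces exactly the right-hand side of \eqref{eq:score-shift-derivative}. To justify it without assuming smoothness, we work in integrated form. For $u,u+he\in U$ with the segment between them contained in $U$, we aim to show
\[
g(u+he)-g(u)= -\int_0^h \E\bigl[f(u+se+X)\langle\rho_X(X),e\rangle\bigr]\dd s .
\]
The expectation inside is finite by Cauchy--Schwarz: it is bounded by $\sup_{U}\E[f(\cdot+X)^2]^{1/2}\,I(X)^{1/2}$. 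The identity itself follows from Fubini together with the fact that $p_X$ is absolutely continuous along a.e.\ line in direction $e$, with derivative $\rho_X p_X$. Finite Fisher information gives $\nabla\sqrt{p_X}\in L^2$, so $\sqrt{p_X}\in W^{1,2}$ and $p_X=(\sqrt{p_X})^2\in W^{1,1}_{\mathrm{loc}}$. This yields the ACL property and the formula $\nabla p_X=\rho_Xp_X$ a.e.

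Once the integrated form holds, the map $s\mapsto g(u+se)$ is absolutely continuous with the stated derivative. The derivative is also locally bounded in $u$, uniformly by the constant above. Hence $g$ is Lipschitz on convex subsets of $U$ and so absolutely continuous. By Rademacher's theorem, $g$ is differentiable a.e., with partial derivatives equal to the directional formulas; this gives \eqref{eq:score-shift-derivative}.

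The main obstacle is the Fubini step. We need joint integrability of $|f(y)|\,|\nabla p_X(y-u-se)|$ over $y$ and $s\in[0,h]$. We would bound it by writing $|\nabla p_X|=|\rho_X|\sqrt{p_X}\cdot\sqrt{p_X}$ and applying Cauchy--Schwarz in $y$ for each fixed $s$. This gives the bound $\E[f(u+se+X)^2]^{1/2}I(X)^{1/2}$, which is uniformly bounded in $s$ by hypothesis, so the double integral is finite.
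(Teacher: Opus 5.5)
Your argument is correct. Note that the paper gives no proof of its own: it quotes \cite[Lemma A.1]{JB04}, which is a one-dimensional statement, and takes the extension to $\R^d$ for granted. What you carry out is the natural $d$-dimensional transcription of the Johnson--Barron argument:
\begin{itemize}
\item write $g(u)=\int f(y)p_X(y-u)\dd y$;
\item use that $p_X\in W^{1,1}$ (from $\sqrt{p_X}\in W^{1,2}$) is absolutely continuous along a.e.\ line in direction $e$, with $\nabla p_X=\rho_X p_X$ a.e.;
\item justify Fubini by $\E\bigl[|f(u+se+X)|\,|\rho_X(X)|\bigr]\le \sup_{v\in U}\E[f(v+X)^2]^{1/2}I(X)^{1/2}$.
\end{itemize}
The resulting integrated identity gives a uniform Lipschitz bound, and Rademacher's theorem gives the gradient. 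So your proof supplies exactly the multidimensional step that the citation leaves implicit.

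Two minor remarks. First, the opening idea of literally applying the one-dimensional lemma on lines is not directly available. The map $s\mapsto g(u_0+se)$ is not of the form $\E[\tilde f(s+Y)]$ for a scalar $Y$; you would have to condition on the component of $X$ orthogonal to $e$ and re-verify the hypotheses conditionally. Your direct computation never uses this reduction, so simply drop that framing.

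Second, the integrated identity gives $\frac{\dd}{\dd s}g(u+se)=-\E\bigl[f(u+se+X)\langle\rho_X(X),e\rangle\bigr]$ only for a.e.\ $s$ on each line. Identifying $\nabla g$ with the formula for a.e.\ $u\in U$ therefore needs a routine Fubini-over-lines step. For $e=e_j$, the exceptional set is measurable, since $u\mapsto\E[f(u+X)\rho_X(X)]$ is measurable by Fubini. It meets every line parallel to $e_j$ in a null set, hence is Lebesgue-null.
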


\begin{lemma}\label{lem:score-moments}
	Suppose $X$ satisfies the assumptions of Lemma \ref{lem:score-shift}.
	Then
	\[
	\E[\rho_X(X)]=0,
	\qquad
	\E\bigl[X\rho_X(X)^\top\bigr]=-I_d.
	\]
\end{lemma}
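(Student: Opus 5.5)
Both identities follow from Lemma \ref{lem:score-shift}, applied with $U=\R^d$ and a suitable test function $f$, and then evaluated at $u=0$. The mechanism is that $g(u)=\E[f(u+X)]$ can be computed explicitly for simple $f$. Comparing its gradient with \eqref{eq:score-shift-derivative} then identifies the moments of $\rho_X(X)$.

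\emph{First identity.} Take $f\equiv 1$. The hypothesis $\sup_u \E[f(u+X)^2]=1<\infty$ holds trivially. Since $g(u)\equiv 1$ is constant, $\nabla g\equiv 0$, and \eqref{eq:score-shift-derivative} gives
\[
\E[\rho_X(X)]=0 .
\]
This holds for almost every $u$, but the right-hand side does not depend on $u$, so it holds outright. The expectation is finite because $I(X)<\infty$ and Cauchy--Schwarz give $\E|\rho_X(X)|\le I(X)^{1/2}$.

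\emph{Second identity.} For each $j=1,\dots,d$, take $f(x)=x_j$. Then $g(u)=u_j+\E[X_j]=u_j$, using $\E X=0$, so $\nabla g(u)=e_j$. Lemma \ref{lem:score-shift} gives
\[
e_j=-\E\bigl[(u_j+X_j)\rho_X(X)\bigr].
\]
Expanding the right-hand side and using the first identity, the $u_j$ term equals $u_j\E[\rho_X(X)]=0$. This leaves $\E[X_j\rho_X(X)]=-e_j$ for every $j$. Stacking these over $j$ gives $\E[X\rho_X(X)^\top]=-I_d$.

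\emph{Main obstacle: the integrability hypothesis.} The lemma requires $\sup_{u\in U}\E[(u_j+X_j)^2]<\infty$, and this fails on all of $\R^d$. The fix is to work on a bounded open set such as $U=B(0,1)$. There $\E[(u_j+X_j)^2]\le 2+2\E|X|^2<\infty$, since $\E|X|^2<\infty$ by assumption.

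The derivative formula then holds for almost every $u\in B(0,1)$. Since both sides are affine in $u$, we can pick any such $u$ and read off the constant part, so no evaluation exactly at $u=0$ is needed. The product $X_j\rho_X(X)$ is integrable by Cauchy--Schwarz, because $\E|X|^2<\infty$ and $I(X)<\infty$. This justifies splitting the expectation in the expansion above.
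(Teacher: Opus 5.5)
Your proposal is correct and follows the paper's proof: you apply Lemma~\ref{lem:score-shift} first with $f\equiv 1$ and then with $f(x)=x_j$, and read off the moments of $\rho_X(X)$ from $\nabla g$. Two of your details are genuine rigor improvements that the paper passes over. You restrict to a bounded open $U$, which is needed because $\sup_{u\in\R^d}\E[(u_j+X_j)^2]=\infty$. You also evaluate at an a.e.\ point $u$ rather than at $u=0$, since the derivative formula is only guaranteed almost everywhere.
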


\begin{proof}
	Apply Lemma \ref{lem:score-shift} with $f\equiv1$ and $U=\R^d$.  Then
 $\nabla g(u)=0$ for all $u$, and
	\eqref{eq:score-shift-derivative} gives $\E[\rho_X(X)]=0$.
	Next, write $x=(x^{(1)},\dots,x^{(d)})$. For each $j\in\{1,\dots,d\}$, by taking $f(x)=x^{(j)}$, we get 
\[
g(u)=\E\bigl[X^{(j)}+u^{(j)}\bigr]=u^{(j)}.
\]   Evaluating
	\eqref{eq:score-shift-derivative} at $u=0$ yields
 $\E[X\rho_X(X)^\top]=-I_d$.
\end{proof}

	The \emph{relative Fisher information} of $X$ with respect to $\mathcal{N}(0,\Sigma)$ is
	\begin{equation*}
\begin{split}
J(X)&:=\E\left[\bigl|\Sigma^{1/2}\bigl(\rho_X(X)+\Sigma^{-1}X\bigr)\bigr|^2\right]\\
&=\int_{\mathbb{R}^d}
\bigl|\Sigma^{1/2}(\nabla\log p_X(x)-\nabla\log\phi_\Sigma(x))\bigr|^2p_X(x)\dd x,
\end{split}
\end{equation*}
where $\phi_\Sigma$ is the density function of $\mathcal N(0,\Sigma)$. When $\Sigma=I_d$, the above definition coincides with \eqref{def-reFI}.

Using Lemma \ref{lem:score-moments} and $\E[XX^\top]=\Sigma$, we can rewrite $J(X)$ as
\begin{equation*}
	J(X)= I\bigl(\Sigma^{-1/2}X\bigr)-d.
\end{equation*}
In particular, if $\Cov(X)=I_d$, then $J(X)=I(X)-d$.
Moreover, $J$ is invariant under invertible linear changes of variables: if $Y=AX$ for an invertible matrix $A$, then $J(Y)=J(X)$.

Let $X$ be an $\R^d$-valued random vector with zero mean and positive definite covariance matrix $\Sigma$, and let $Z_\Sigma\sim\mathcal N(0,\Sigma)$ be a centered Gaussian random variable with the same covariance.
Set $\widetilde X:=\Sigma^{-1/2}X$ and
$Z:=\Sigma^{-1/2}Z_\Sigma\sim\mathcal N(0,I_d)$. Since relative entropy is invariant under invertible linear transformations, we have
\begin{equation*}
	\Ent(X\mid Z_\Sigma)=\Ent(\widetilde X\mid Z).
\end{equation*}
Thus, for the entropic CLT, it suffices to work in the isotropic case
$\Cov(X)=I_d$.

From now on, we assume $\E X=0$,  $\Cov(X)=I_d$ and fix
$Z\sim\mathcal N(0,I_d)$ independent of $X$.
\begin{lemma}\label{lem:decay-Jt}
For  $t\ge0$, let   \[
X(t):=e^{-t}X+\sqrt{1-e^{-2t}}Z.
\] Then $$
J(X(t))\le e^{-2t}J(X).
$$
\end{lemma}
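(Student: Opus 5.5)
Since $\Cov(X(t))=e^{-2t}I_d+(1-e^{-2t})I_d=I_d$, we have $J(X(t))=I(X(t))-d$, so the claim is equivalent to
\[
I(X(t))-d\le e^{-2t}\bigl(I(X)-d\bigr).
\]
The plan is to derive this from a Fisher information inequality for sums via a score-projection argument, in the spirit of the Johnson--Barron method, relying only on Lemmas \ref{lem:score-shift} and \ref{lem:score-moments}.

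\textbf{Step 1 (score of a sum as conditional expectation).} Set $U=e^{-t}X$, $V=\sqrt{1-e^{-2t}}\,Z$ and $W=U+V=X(t)$. We show that for any weights $\beta\in[0,1]$,
\[
\rho_W(W)=\E\bigl[\beta\rho_U(U)+(1-\beta)\rho_V(V)\,\big|\,W\bigr].
\]
Since $p_W(w)=\int p_U(w-v)p_V(v)\dd v$, we can differentiate under the integral, using Gaussian smoothing or Lemma \ref{lem:score-shift} for justification. This gives $\rho_W(W)=\E[\rho_U(U)\mid W]$, and symmetrically $\rho_W(W)=\E[\rho_V(V)\mid W]$. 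Taking the convex combination yields the identity. If $I(X)=\infty$ the claim is trivial, so we assume $I(U)<\infty$; $V$ is Gaussian with finite Fisher information in any case.

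\textbf{Step 2 (the key cancellation).} Add $W=U+V$ inside the conditional expectation and use $\E[W\mid W]=W$:
\[
\rho_W(W)+W=\E\bigl[\beta(\rho_U(U)+a U)+(1-\beta)(\rho_V(V)+bV)\,\big|\,W\bigr],
\]
where $a,b$ are chosen so that $\beta a U+(1-\beta)bV=U+V$, that is, $\beta a=1=(1-\beta)b$. Natural choices make $\rho_U+aU$ the relative score of $U$ with respect to $\mathcal N(0,e^{-2t}I_d)$, namely $a=e^{2t}$, so $\beta=e^{-2t}$. With this $\beta$ we get $(1-\beta)b=1$ with $b=(1-e^{-2t})^{-1}$, which is exactly the Gaussian relative score of $V$. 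Hence $\rho_V(V)+bV\equiv0$.

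\textbf{Step 3 (conclude).} Conditional Jensen gives
\[
J(W)=\E|\rho_W(W)+W|^2\le e^{-4t}\,\E\bigl|\rho_U(U)+e^{2t}U\bigr|^2.
\]
By scaling, $\rho_U(U)=e^{t}\rho_X(X)$, so $\rho_U(U)+e^{2t}U=e^{t}(\rho_X(X)+X)$. Therefore
\[
J(W)\le e^{-4t}e^{2t}J(X)=e^{-2t}J(X).
\]

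The main obstacle is the rigorous justification of the conditional-expectation representation of the score in Step 1: differentiation under the integral sign and integrability. We handle this via Lemma \ref{lem:score-shift} applied with $f$ a bounded test function, using the Gaussian component to guarantee smoothness of $p_W$.
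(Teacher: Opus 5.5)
Your proof is correct, but it follows a different route from the paper. The paper uses Stam's inequality $1/I(Y_1+Y_2)\ge 1/I(Y_1)+1/I(Y_2)$ as a black box. Combined with the scaling $I(aY)=a^{-2}I(Y)$, it gets $1/I(X(t))\ge e^{-2t}/I(X)+(1-e^{-2t})/d$, and then weakens this via the weighted harmonic--arithmetic mean inequality to $I(X(t))\le d+e^{-2t}(I(X)-d)$.

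You instead run Blachman's conditional-expectation argument (the usual proof of Stam's inequality) directly on relative scores, with the fixed weight $\beta=e^{-2t}$. Since the Gaussian relative score $\rho_V(V)+bV$ vanishes identically, no cross term appears and conditional Jensen finishes in one line. In the plain-score form $I(W)\le\beta^2I(U)+(1-\beta)^2I(V)$ of Blachman's bound, your $\beta=e^{-2t}$ is exactly the weight giving the arithmetic-mean bound; the optimal weight gives Stam. So you prove precisely the weakened statement the paper actually uses, without the detour.

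What each route buys:
\begin{itemize}
\item Your version is self-contained and matches the projection philosophy of Section~\ref{sec-main}. Your Step 1 is just Lemma~\ref{lem:score-sum}, already available in the paper, so the regularity discussion can be replaced by a citation.
\item Your version also exhibits the identity $\rho_{X(t)}(X(t))+X(t)=e^{-t}\E[\rho_X(X)+X\mid X(t)]$, the probabilistic form of the OU commutation relation $\nabla P_t=e^{-t}P_t\nabla$.
\item The paper's version is shorter given Stam's inequality, and it passes through a sharper intermediate bound.
\end{itemize}

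One small bookkeeping point: set aside $t=0$ explicitly. There $V\equiv 0$ has no density and $b=(1-e^{-2t})^{-1}$ is undefined, although the claim is trivial in that case.
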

\begin{proof}
    We prove this lemma using Stam's inequality and the scaling property of Fisher information.	By Stam's inequality \cite{Stam59, Bla65}, for any independent
	$\R^d$-valued random vectors $Y_1,Y_2$ with finite Fisher information,
	\[
	\frac1{I(Y_1+Y_2)} \ge \frac1{I(Y_1)}+\frac1{I(Y_2)}.
	\]
	Apply this with
	\[
	Y_1=e^{-t}X,\qquad Y_2=\sqrt{1-e^{-2t}}Z,
	\]
	so that $Y_1+Y_2=X(t)$. Using the scaling rule
	$I(aY)=a^{-2}I(Y)$ , we obtain
	\begin{equation*}
		\frac1{I(X(t))}
		 \ge \frac1{I(e^{-t}X)}+\frac1{I(\sqrt{1-e^{-2t}}Z)}= \frac{e^{-2t}}{I(X)}+\frac{1-e^{-2t}}{d}.
	\end{equation*}
Combining with the weighted harmonic-arithmetic means inequality,
\[
I\bigl(X(t)\bigr) \le \frac{1}{\frac{e^{-2t}}{I(X)}+\frac{1-e^{-2t}}{d}}\le d + e^{-2t}\bigl(I(X)-d\bigr).
\]
Therefore,
	\[
	J\bigl(X(t)\bigr)
	=I\bigl(X(t)\bigr)-d\leq e^{-2t}J(X).\qedhere
	\]
\end{proof}
The above proof can be viewed as a probabilistic explanation of the
	exponential decay of relative Fisher information along the OU semigroup.
	An alternative derivation uses the geometric interpretation of entropy and
	relative Fisher information along diffusion flows developed by Otto and Villani
	(see, e.g., \cite{Otto01}), where the OU flow is seen as a gradient flow of the relative entropy.

\begin{lemma}[de Bruijn identity]\label{lem:debruijn}
	Assume that $\Ent(X\mid Z)<\infty$.
	Then, for every $t>0$,
$$
		\Ent(X\mid Z)-\Ent(X(t)\mid Z)
		=\int_0^t J(X(s))\dd s.
	$$
\end{lemma}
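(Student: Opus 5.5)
The plan is to prove the de Bruijn identity in differential form and then integrate in time. For $s>0$ let $p_s$ be the density of $X(s)$ and $\gamma$ the standard Gaussian density. Put $h_s:=p_s/\gamma$. Since $X(s)$ has the law of the Ornstein--Uhlenbeck process started from $X$, $h_s=P_s h_0$ (in the sense of measures when $X$ has no density), where $P_s$ is the OU semigroup with generator $L=\Delta-x\cdot\nabla$. Equivalently, $p_s$ solves the Fokker--Planck equation
\[
\partial_s p_s=\nabla\cdot\bigl(\nabla p_s+x\,p_s\bigr)=\nabla\cdot\bigl(p_s\nabla\log(p_s/\gamma)\bigr).
\]
For $s>0$ the density $p_s$ is a Gaussian convolution of the law of $e^{-s}X$, hence smooth and strictly positive. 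It is dominated by a Gaussian of variance $1-e^{-2s}$ times a constant, and its derivatives have Gaussian-type tails. This regularity justifies the computations below on $(\varepsilon,t]$.

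The first step is the derivative of the entropy. Write $H(s):=\Ent(X(s)\mid Z)=\int p_s\log(p_s/\gamma)\dd x$. Differentiating under the integral, using $\int\partial_s p_s\dd x=0$, and integrating by parts gives
\[
H'(s)=\int \partial_s p_s\,\log\frac{p_s}{\gamma}\dd x=-\int p_s\Bigl|\nabla\log\frac{p_s}{\gamma}\Bigr|^2\dd x=-J(X(s)),
\]
because $\nabla\log\gamma(x)=-x$, so $\nabla\log(p_s/\gamma)=\rho_{X(s)}+x$. This matches the definition \eqref{def-reFI} (note $\Cov X(s)=I_d$). The boundary terms vanish because of the Gaussian tail bounds. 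Differentiation under the integral is justified by dominated convergence, using local-in-$s$ bounds on $p_s$, $\partial_s p_s$ and $|\nabla p_s|^2/p_s$ by integrable functions times polynomial weights. Finite second moments of $X$ ensure the term $\log\gamma$, which is quadratic, is integrable. Integrating over $[\varepsilon,t]$ gives
\[
H(\varepsilon)-H(t)=\int_\varepsilon^t J(X(s))\dd s .
\]

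The main obstacle is letting $\varepsilon\downarrow0$. We need $H(\varepsilon)\to H(0)=\Ent(X\mid Z)$, using only $\Ent(X\mid Z)<\infty$, and $J(X(s))$ may blow up near $s=0$. The plan has two parts.

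First, $H$ is nonincreasing, by the formula above or by the data-processing inequality for the Markov kernel $P_\varepsilon$, which fixes $\gamma$. Hence $H(\varepsilon)\le H(0)$.

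Second, $X(\varepsilon)\to X$ weakly, and relative entropy is lower semicontinuous under weak convergence with the reference measure fixed. Hence $\liminf_{\varepsilon\to0}H(\varepsilon)\ge H(0)$.

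Together these give $H(\varepsilon)\to H(0)$. Monotone convergence applies to $\int_\varepsilon^t J\dd s$ since $J\ge0$. This yields the identity for every $t>0$, with both sides finite.
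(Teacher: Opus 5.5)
Your overall architecture is sound, and the step you identify as the main obstacle is handled correctly. $H(\varepsilon)\le H(0)$ follows from data processing for the $\gamma$-invariant kernel $P_\varepsilon$. Weak lower semicontinuity of $\Ent(\cdot\mid Z)$ gives the reverse inequality in the limit, and monotone convergence finishes. This is a clean, self-contained substitute for the limiting part of Barron's lemma, which the paper simply cites.

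The gap is in the regularity paragraph that is supposed to give $H'(s)=-J(X(s))$ for $s>0$.

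\emph{The Gaussian-domination claim is false.} Under the lemma's hypotheses you only know $\E|X|^2=d$ and $\Ent(X\mid Z)<\infty$. Write $p_s(x)=\E[\varphi_{1-e^{-2s}}(x-e^{-s}X)]$, where $\varphi_v$ is the $\mathcal N(0,vI_d)$ density. Then $p_s(x)\ge c_s\,\mathbb P(|e^{-s}X-x|\le 1)$, so $p_s$ inherits the tails of $X$, which may be polynomial. Hence ``the boundary terms vanish because of the Gaussian tail bounds'' has no basis.

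\emph{In the Ornstein--Uhlenbeck parametrization the natural bounds are third-moment quantities.} We have $\partial_s p_s=\Delta p_s+d\,p_s+x\cdot\nabla p_s$ and $\log(p_s/\gamma)=\log p_s+\tfrac12|x|^2+\mathrm{const}$. So $\partial_s p_s\log(p_s/\gamma)$ contains $\tfrac12|x|^2\,x\cdot\nabla p_s$, and your flux contains $\tfrac12|x|^2\,x\,p_s$. For a concrete failure, take $d=1$ and let the density of $X$ be a symmetric sum of translates of a fixed smooth bump, with mass $\asymp 2^{-3k}$ at $\pm 2^k$, $k\ge0$, rescaled to unit variance. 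This $X$ satisfies $\Ent(X\mid Z)<\infty$ but $\E|X|^3=\infty$. Each bump, transported with velocity $-e^{-s}a_k$, contributes an amount of order one to $\int|\partial_s p_s|\,|\log(p_s/\gamma)|\dd x$, so that integral is infinite. Therefore $H'(s)=\int\partial_s p_s\log(p_s/\gamma)\dd x$ cannot come from dominated convergence, and the integration by parts is not justified as written.

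\emph{Repair.} Remove the drift before differentiating. This is exactly the paper's route: it cites the heat-flow de Bruijn identity (Barron, Cover--Thomas) and changes time. Since $\E|X(s)|^2=d$ for all $s$ and $X(s)\overset{d}{=}e^{-s}(X+\sqrt{\tau}Z)$ with $\tau=e^{2s}-1$,
\[
H(s)=-h\bigl(X+\sqrt{\tau}Z\bigr)+ds+\tfrac d2\log(2\pi e),
\]
where $h$ is differential entropy. Set $f_\tau(y)=\E[\varphi_\tau(y-X)]$. Then $\partial_\tau f_\tau=\tfrac12\Delta f_\tau$ has no drift term, and locally uniformly in $\tau$:
\begin{itemize}
\item $|\log f_\tau(y)|\le C(1+|y|^2)$, by Jensen for the lower bound;
\item $|\nabla f_\tau(y)|\le\tau^{-1}\E[|y-X|\varphi_\tau(y-X)]$;
\item $|\Delta f_\tau(y)|\le\E[(\tau^{-2}|y-X|^2+d\tau^{-1})\varphi_\tau(y-X)]$.
\end{itemize}
These make $\Delta f_\tau\log f_\tau$ and $|\nabla f_\tau|\,|\log f_\tau|$ integrable using only $\E|X|^2<\infty$. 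This yields $\frac{\mathrm d}{\mathrm d\tau}h(X+\sqrt\tau Z)=\frac12 I(X+\sqrt\tau Z)$, hence
\[
H'(s)=d-e^{2s}I(X+\sqrt\tau Z)=d-I(X(s))=-J(X(s)).
\]
Combined with your endpoint argument, this gives a complete proof.
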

This is the classical de Bruijn identity; the proof follows from
\cite[Lemma 1]{Bar86}, \cite[Theorem 17.7.2]{CT06} and the change of variables
$s=e^{2t}-1$, and we omit the details.

\subsection{Functional inequalities: Wang type dimension-free Harnack inequality and Poincar\'e inequality}\label{WangHI}
As a weaker version of Wang type dimension-free Harnack inequality put forward in \cite{Wan97}, Wang type log-Harnack inequality \cite{RW10} has become an efficient tool on stochastic analysis; for an in-depth explanation of its applications, we refer to the monograph by Wang 
\cite[Subsection 1.4.1]{Wan13} and the references therein.

Consider the OU process
\begin{equation}\label{eq-OUSDE11}
\dd X_t=-X_t\dd t+\sqrt2\dd B_t,
\end{equation}
where $B_t$ is a standard Brownian motion on $\R^d$. Denote by $P_t$ the Markov semigroup associated with \eqref{eq-OUSDE11}. Using coupling by change of measure (see e.g. \cite[pp. 4-5]{Wan13}), we have the log-Harnack inequality: for every bounded measurable function $f:\R^d\to[1,\infty)$,
$$
P_t\log f(x)\le\log P_tf(y)+\frac{1}{2\left(e^{2t}-1\right)}|x-y|^2,\quad t>0,\,x,y\in\R^d.
$$
Let $Z\sim\mathcal N(0,I_d)$ and  $X$ be an independent random vector on $\R^d$ admitting a density. For simplicity, denote by $X_t$ the solution to \eqref{eq-OUSDE11} with initial value $X_0=X$. 
By \cite[Theorem 1.4.2\,(3)]{Wan13}, the log-Harnack inequality above implies the entropy-cost inequality
\begin{equation}\label{entro112}
\Ent(X_t\mid Z)\le\frac{1}{2\left(e^{2t}-1\right)}W_2^2(X,Z),
\quad t>0.
\end{equation}

Let $X$ be an $\R^d$-valued random vector.
For $f:\R^d\to\R$ with $\E f(X)^2<\infty$, we write
\[
 \Var(f(X))
= \E\left[\bigl(f(X)-\E f(X)\bigr)^2\right].
\]
We say that $X$ satisfies a \emph{Poincar\'e inequality} if for some $C>0$ 
\begin{equation}\label{eq:poincare}
	\Var(f(X))\le C\E\left[|\nabla f(X)|^2\right]
\end{equation}
holds for all locally Lipschitz functions $f:\R^d\to\R$.
The Poincar\'e constant of $X$ is defined by
\[
C_P(X)
:=\inf\bigl\{C>0:\ \Var(f(X))\le C\E\left[|\nabla f(X)|^2\right]
\ \text{for all locally Lipschitz } f:\R^d\to\R\bigr\}.
\]

Let $\Sigma=\Cov(X)=\E[XX^\top]$.
For any $y\in\R^d$, consider the linear test function $f_y(x)=y\cdot x$.
Then $\nabla f_y(x)=y$ and
\[
\Var(f_y(X))=\Var(y\cdot X)=y^\top \Sigma y,
\qquad \E\big[|\nabla f_y(X)|^2\big]=|y|^2.
\]
Substituting this into \eqref{eq:poincare} gives
\[
y^\top \Sigma y \le C_P(X)|y|^2.
\]
Hence, $C_P(X)$ satisfies the universal lower bound
\begin{equation*}
	C_P(X) \ge \sup_{y\ne0}\frac{y^\top\Sigma y}{|y|^2}
	=\|\Sigma\|_{\mathrm{op}}.
\end{equation*}
In particular, if $\Cov(X)=I_d$, then
\[
C_P(X)\ge 1.
\]

Finally, we state a stability property of the Poincar\'e constant under
OU smoothing, which is a direct consequence of the scaling property $C_P(aX)=a^2C_P(X)$ for $a\in\R$ and the convolution property $C_P(Y_1+Y_2)\leq C_P(Y_1)+C_P(Y_2)$ for independent random variables $Y_1$ and $Y_2$.
\begin{lemma}\label{lem:CP-OU}
	Let $X$ be an $\R^d$-valued random vector with Poincar\'e constant $C_P(X)$ and
	$Z\sim\mathcal N(0,I_d)$ independent of $X$.
	For $t\ge0$, set
	\[
	X(t):=e^{-t}X+
    \sqrt{1-e^{-2t}}Z.
	\]
	Then for every $t\ge0$,
	\[
	C_P(X(t))\le e^{-2t}C_P(X)+(1-e^{-2t}).
	\]
	In particular, if $\Cov(X)=I_d$, then
	\[
	C_P(X(t))\le C_P(X).
	\]
\end{lemma}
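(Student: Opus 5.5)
The bound follows from the two properties the paper has just named: the scaling rule $C_P(aY)=a^2C_P(Y)$ and subadditivity $C_P(Y_1+Y_2)\le C_P(Y_1)+C_P(Y_2)$ for independent $Y_1,Y_2$. For completeness I would first verify both, then apply them to $Y_1=e^{-t}X$ and $Y_2=\sqrt{1-e^{-2t}}Z$, and finally use the Gaussian Poincaré constant $C_P(Z)=1$.

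\emph{Scaling.} Given a locally Lipschitz $f$, set $g(x)=f(ax)$. Then $\nabla g(x)=a\nabla f(ax)$, so
\[
\Var(f(aY))=\Var(g(Y))\le C_P(Y)\,a^2\,\E\bigl[|\nabla f(aY)|^2\bigr].
\]
Taking the infimum over admissible constants gives $C_P(aY)\le a^2C_P(Y)$. For $a=0$ the statement is trivial.

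\emph{Subadditivity.} This is the standard tensorization argument through the law of total variance. Write $h(y_1)=\E f(y_1+Y_2)$. Then
\[
\Var(f(Y_1+Y_2))=\E\bigl[\Var\bigl(f(Y_1+Y_2)\mid Y_1\bigr)\bigr]+\Var\bigl(h(Y_1)\bigr).
\]
The first term is at most $C_P(Y_2)\,\E|\nabla f(Y_1+Y_2)|^2$, applying the Poincaré inequality for $Y_2$ with $Y_1$ frozen. For the second term, $\nabla h(y_1)=\E\nabla f(y_1+Y_2)$, and Jensen's inequality gives $|\nabla h|^2\le \E|\nabla f(y_1+Y_2)|^2$. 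Hence $\Var(h(Y_1))\le C_P(Y_1)\,\E|\nabla f(Y_1+Y_2)|^2$. Adding the two bounds yields subadditivity.

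The only technical point is differentiating under the expectation, which needs some integrability of $\nabla f$. I would handle this by first treating bounded Lipschitz $f$, or smooth compactly supported $f$, and then extending by a standard truncation and approximation argument. I expect this justification to be the main obstacle; the rest is algebra.

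\emph{Conclusion.} The Gaussian Poincaré inequality gives $C_P(Z)=1$: the upper bound is the classical inequality, and the lower bound comes from linear test functions as shown above. Combining the three facts,
\[
C_P(X(t))\le e^{-2t}C_P(X)+(1-e^{-2t})\,C_P(Z)=e^{-2t}C_P(X)+(1-e^{-2t}).
\]
This right-hand side is a convex combination of $C_P(X)$ and $1$. When $\Cov(X)=I_d$ we showed $C_P(X)\ge1$, so the convex combination is at most $C_P(X)$, which gives $C_P(X(t))\le C_P(X)$.
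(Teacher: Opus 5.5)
Your proposal is correct and follows the paper's route. The paper presents the lemma as a direct consequence of the scaling rule $C_P(aY)=a^2C_P(Y)$ and the subadditivity $C_P(Y_1+Y_2)\le C_P(Y_1)+C_P(Y_2)$ for independent summands, together with $C_P(Z)=1$ and the bound $C_P(X)\ge 1$ for isotropic $X$; it takes the two properties for granted, while your change-of-variables proof of scaling and your law-of-total-variance plus Jensen proof of subadditivity are the standard arguments that fill in what the paper omits.
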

\subsection{Ridge function space and additive function space}\label{subsec-space}
    For $n\geq 2$, let $X_1,\dots,X_n$ be i.i.d. $\R^d$-valued random vectors. Set
\begin{equation*} 
  S_m:=\sum_{k=1}^m X_k,\qquad m=1,\dots,n,  
\end{equation*}
then $Z_n=\frac{S_n}{\sqrt n}$.
Consider the Hilbert space
\begin{align*}
	L_0^2(X_1,\dots,X_n;\R^d)
	:=\Bigl\{h(X_1,\dots,X_n)\mid 
	h:(\R^d)^n\to\R^d,\ &\E [h(X_1,\dots,X_n)]=0,\\
	&\E\big[|h(X_1,\dots,X_n)|^2\big]<\infty\Bigr\},
\end{align*}
equipped with the inner product
\[
\langle h_1(X_1,\dots,X_n),h_2(X_1,\dots,X_n)\rangle=\E\bigl[h_1(X_1,\dots,X_n)\cdot h_2(X_1,\dots,X_n)\bigr].
\]
For each $i=1,\dots,n$, define the subspace
\[
L_0^2(X_i;\R^d)
:=\bigl\{h(X_i)\mid h:\R^d\to\R^d,\ \E [h(X_i)]=0,\ \E\big[|h(X_i)|^2\big]<\infty\bigr\}.
\]
The additive subspace and the ridge subspace of $L_0^2(X_1,\dots,X_n;\R^d)$ are defined by
\[
L_{\rm add}^2(X_1,\dots,X_n;\R^d)
:=\left\{\sum_{i=1}^n h_i(X_i)\,\Big|\, h_i\in L_0^2(X_i;\R^d),\ 1\le i\le n\right\},
\]
and
\[
L_{\rm ridge}^2(X_1,\dots,X_n;\R^d)
:=\left\{h(Z_n)\,\big|\,
h:\R^d\to\R^d,\ 
\E [h(Z_n)]=0,\ 
\E\left[|h(Z_n)|^2\right]<\infty\right\}.
\]
Since $X_1,\dots,X_n$ are independent, the subspaces $L_0^2(X_i;\R^d)$,
$1\le i\le n$, are mutually orthogonal in $L_0^2(X_1,\dots,X_n;\R^d)$. Hence,
\[
L_{\rm add}^2(X_1,\dots,X_n;\R^d)=\bigoplus_{i=1}^n L_0^2(X_i;\R^d).
\]
Therefore, for any $H\in L_0^2(X_1,\dots,X_n;\R^d)$, the $L^2$-orthogonal projection of $H$ onto
$L_{\rm add}^2(X_1,\dots,X_n;\R^d)$ is given by
\begin{equation}\label{eq:add-proj}
\Pi_{\rm add}H=\sum_{i=1}^n \E[H\mid X_i].
\end{equation}

\begin{lemma}[{\cite[Lemma  3.1]{JB04}}]\label{lem:score-sum}
	Let $Y_1,Y_2$ be independent $\R^d$-valued random vectors.
 Then 
	\begin{equation}\label{eq-proj-1}
	\rho_{Y_1+Y_2}(Y_1+Y_2)=\E[\rho_{Y_1}(Y_1)\mid Y_1+Y_2]=\E[\rho_{Y_2}(Y_2)\mid Y_1+Y_2].
	\end{equation}
\end{lemma}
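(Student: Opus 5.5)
The statement to prove is Lemma \ref{lem:score-sum}: for independent $Y_1,Y_2$, the score of the sum equals the conditional expectation of either summand's score given the sum. The plan is to work directly at the level of densities, as in the one-dimensional argument of \cite{JB04}, using the convolution formula for $p_{Y_1+Y_2}$.

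\textbf{Step 1 (convolution).} Write
\[
p_{Y_1+Y_2}(s)=\int_{\R^d} p_{Y_1}(s-y)\,p_{Y_2}(y)\dd y .
\]
I will only assume $Y_1$ has a density with finite Fisher information, since this is what is needed for the first equality; the symmetric equality uses the same argument with the roles swapped.

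\textbf{Step 2 (differentiate under the integral).} Compute
\[
\nabla p_{Y_1+Y_2}(s)=\int \nabla p_{Y_1}(s-y)\,p_{Y_2}(y)\dd y=\int \rho_{Y_1}(s-y)\,p_{Y_1}(s-y)\,p_{Y_2}(y)\dd y .
\]
To justify the interchange rigorously, I would use Lemma \ref{lem:score-shift}. Apply it with $X=Y_1$ and test function $f=\mathbf 1_A(\cdot+Y_2)$ conditioned on $Y_2$, or more simply, to the function $u\mapsto \E[\varphi(u+Y_1+Y_2)]$ for bounded smooth $\varphi$. This gives the weak identity
\[
\nabla \E[\varphi(u+Y_1+Y_2)]=-\E\bigl[\varphi(u+Y_1+Y_2)\,\rho_{Y_1}(Y_1)\bigr].
\]
On the other hand, integrating by parts against the density of the sum gives
\[
\nabla\E[\varphi(u+S)]=-\E\bigl[\varphi(u+S)\rho_S(S)\bigr], \qquad S=Y_1+Y_2 .
\]

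\textbf{Step 3 (identify the conditional expectation).} Equating the two expressions from Step 2 at $u=0$ gives, for all bounded smooth $\varphi$,
\[
\E\bigl[\varphi(S)\rho_S(S)\bigr]=\E\bigl[\varphi(S)\,\E[\rho_{Y_1}(Y_1)\mid S]\bigr].
\]
By density of such $\varphi$ in $L^2(\mathcal L(S))$, this yields $\rho_S(S)=\E[\rho_{Y_1}(Y_1)\mid S]$ almost surely. Equivalently, in density form, dividing the Step 2 formula by $p_S(s)$ and recognising $p_{Y_1}(s-y)p_{Y_2}(y)/p_S(s)$ as the conditional density of $Y_2$ given $S=s$ gives the same identity. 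Swapping the roles of $Y_1$ and $Y_2$ gives the second equality.

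\textbf{Main obstacle.} The only delicate point is the regularity and integrability needed to differentiate under the integral and to ensure $\rho_S(S)\in L^2$. Finiteness of $I(Y_1)$, combined with Cauchy--Schwarz, gives
\[
\E\bigl|\E[\rho_{Y_1}\mid S]\bigr|^2\le I(Y_1),
\]
which handles integrability. The weak (test-function) formulation through Lemma \ref{lem:score-shift} avoids pointwise differentiability issues.
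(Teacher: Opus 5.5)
Your argument is correct and is essentially the proof of \cite[Lemma 3.1]{JB04}, which the paper cites without giving its own proof: differentiate the convolution $p_{Y_1+Y_2}(s)=\E[p_{Y_1}(s-Y_2)]$, justify the interchange via Lemma \ref{lem:score-shift}, divide by $p_{Y_1+Y_2}$, and recognise the conditional law of $Y_2$ given $Y_1+Y_2$. The one point to tidy is that your second identity in Step 2, and the claim $\rho_S(S)\in L^2$ in Step 3, already assume $S$ has a score with enough integrability, which is part of what is being proved; read your first identity at $u=0$ instead as saying the distributional gradient of $p_S$ is $\E[\rho_{Y_1}(Y_1)\mid S=\cdot\,]\,p_S\in L^1$, which gives both the existence of $\rho_S$ and the identity at once.
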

In terms of projections, $\rho_{Y_1+Y_2}(Y_1+Y_2)$ is the $L^2$-projection of each individual score $\rho_{Y_i}(Y_i)$ onto the ridge subspace $L^2_{\mathrm{ridge}}(Y_1,Y_2;\R^d)$. Consequently, the projection of the additive score vector $\rho_{Y_1}(Y_1)+\rho_{Y_2}(Y_2)$ onto the same subspace is equal to $2\rho_{Y_1+Y_2}(Y_1+Y_2)$.

 For $n\geq 2$ and $m\in \{2,\dots,n\}$, if we take $Y_1=X_1$ and $Y_2=X_2+\dots+X_m$, then \eqref{eq-proj-1} yields
\begin{equation*}
	\rho_{S_m}(S_m)=\E[\rho_{X_1}(X_1)\mid S_m].
	\end{equation*}
For any measurable function $\phi$ on $\R^d$, it follows that \begin{equation}\label{eq-proj-m}
	\E[\phi(S_m)\rho_{X_1}(X_1)]=\E[\phi(S_m)\rho_{S_m}(S_m)].
	\end{equation}

\section{Proof of the main theorem}\label{sec-main}
\subsection{Johnson--Barron projection method and related lower bounds}\label{subsec-lowerbound}
For $n\geq 2$, let $X_1,\dots,X_n$ be i.i.d. $\R^d$-valued random vectors. Recall $S_m=X_1+\dots+X_m$ and $Z_n=\frac{S_n}{\sqrt{n}}$ for $m, n \in \mathbb{N}$. For $f:\R^d\to\R^d$ such that $f(Z_n)\in L_{\rm ridge}^2(X_1,\dots,X_n;\R^d)$, we define recursively $(f_m)_{m=1}^n$ by 
\begin{equation} \label{e:fnSeq}
\begin{split}
&f_n(\cdot)=f(\cdot), \\
&f_m(\cdot)=\E_{X_{m+1}}\Bigl[f_{m+1}\Bigl(\cdot+\frac{X_{m+1}}{\sqrt n}\Bigr)\Bigr], \ \ \ m=1,2,\dots,n-1.    
\end{split}
\end{equation}
For each $m=1,\dots,n$,
\[
f_m\Bigl(\frac{S_m}{\sqrt n}\Bigr)
=\E\bigl[f(Z_n)\mid S_m\bigr].
\]
We also define, for $u\in\R^d$,
\[
g(u):=\sqrt n\E\Bigl[f\Bigl(\frac{S_{n-1}+u}{\sqrt n}\Bigr)\Bigr],
\]
so that
\[
g(X_i)=\sqrt n\E\bigl[f(Z_n)\mid X_i\bigr],\qquad i=1,\dots,n.
\]
It is clear to see from \eqref{eq:add-proj} that
$$\Pi_{\rm add} f(Z_n)=\frac{1}{\sqrt n}\sum_{i=1}^n g(X_i).$$
\begin{lemma} \label{lem:successive-tele}
Let $X_1,\dots,X_n$ be i.i.d.\ $\R^d$-valued random vectors with absolutely continuous densities.
	Assume $\E X_1=0$, $\Cov(X_1)=I_d$, and that $X_1$ has finite Poincar\'e constant $R$ and finite Fisher information $I(X_1)$. 
	Under the assumptions and notation above, 
	\begin{equation} \label{e:ProjError1}
\E\left[\Bigl|f\Bigl(Z_n\Bigr)-\frac{1}{\sqrt n}\sum_{i=1}^n g(X_i)\Bigr|^2\right]=\sum_{i=2}^n \Delta_i,
	\end{equation}
    where $\Delta_i:=\E\left[\Bigl|
	f_i\Bigl(\frac{S_i}{\sqrt n}\Bigr)-f_{i-1}\Bigl(\frac{S_{i-1}}{\sqrt n}\Bigr)-\frac1{\sqrt n}g(X_i)\Bigr|^2\right]$ with the following lower bound:
	\begin{equation} \label{e:ProjError2}
	\Delta_i\ge\frac{i-1}{nI(X_1)R}\E\left[|g(X_1)-MX_1|^2\right], \quad \quad  M=\mathbb{E}[\nabla g(X_1)].
	\end{equation}
    In particular, as $f(\cdot)=\rho_{Z_n}(\cdot)$, we have 
    \begin{equation} \label{e:M=FMatrix}
    M=-I_M(Z_n),
    \end{equation}
    where $I_M(Z_n)$ is the Fisher information matrix of $Z_n$ defined by \eqref{e:FIMatrix}.  
\end{lemma}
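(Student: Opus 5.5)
The plan has three parts: a martingale decomposition of the projection error, a Poincar\'e-plus-score argument for the lower bound on each increment, and a direct computation of $M$ when $f=\rho_{Z_n}$.

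\textbf{Telescoping identity.} Write $F_m:=f_m(S_m/\sqrt n)=\E[f(Z_n)\mid S_m]$, or more precisely the conditional expectation given $X_1,\dots,X_m$; these coincide by independence. Then $F_1,\dots,F_n$ is a martingale in the filtration $\mathcal F_m=\sigma(X_1,\dots,X_m)$, with $F_n=f(Z_n)$. Note that $F_1=f_1(X_1/\sqrt n)=\E[f(Z_n)\mid X_1]=g(X_1)/\sqrt n$. This gives
\[
f(Z_n)-\frac1{\sqrt n}\sum_{i=1}^n g(X_i)=\sum_{i=2}^n D_i,\qquad D_i:=F_i-F_{i-1}-\frac1{\sqrt n}g(X_i).
\]
To get \eqref{e:ProjError1} I will show the $D_i$ are orthogonal. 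For $j<i$, $D_j$ is $\mathcal F_{i-1}$-measurable. Moreover $\E[D_i\mid\mathcal F_{i-1}]=0$, because $F$ is a martingale and $\E[g(X_i)\mid \mathcal F_{i-1}]=\E g(X_i)=\sqrt n\,\E f(Z_n)=0$. Hence $\E[D_i\cdot D_j]=0$ and the squared norm splits as $\sum_i\Delta_i$.

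\textbf{Lower bound \eqref{e:ProjError2}.} Fix $i\ge2$ and condition on $X_i$. The term $D_i$ is a function of $(S_{i-1},X_i)$. Put $h(u):=\E\bigl[D_i\cdot\rho_{S_{i-1}}(S_{i-1})\mid X_i=u\bigr]$, or better work with the vector $\E[D_i\otimes \rho_{S_{i-1}}(S_{i-1})]$. By Cauchy--Schwarz,
\[
\Delta_i\ge \frac{\bigl|\E[D_i\,\psi(X_i)^\top\rho_{S_{i-1}}(S_{i-1})]\bigr|^2}{\E|\psi(X_i)^\top\rho_{S_{i-1}}|^2}
\]
for a suitable test direction $\psi$. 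I would try the test variable $W:=\rho_{S_{i-1}}(S_{i-1})^\top (g(X_i)-MX_i)$. It satisfies $\E|W|^2\le I(S_{i-1})\,\E|g(X_1)-MX_1|^2/d$-type bounds, and more crudely $\E|W|^2\le \|I_M(S_{i-1})\|\,\E|g-MX|^2$. Stam's inequality gives $I_M(S_{i-1})\preceq I_M(X_1)/(i-1)$, hence a factor $I(X_1)/(i-1)$.

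The covariance $\E[D_i W]$ is computed using Lemma~\ref{lem:score-shift} with $X=S_{i-1}$. The terms $F_{i-1}$ and $g(X_i)$ are killed, since $\E[\rho_{S_{i-1}}]=0$ and by independence. The term $F_i$ gives $n^{-1/2}\E[\nabla_u F_i\,\cdot]$, which after averaging over $S_{i-1}$ produces $\E[\nabla g(X_i)]$-type quantities.

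A cleaner route, and the one I would actually pursue, mirrors Johnson--Barron. Decompose $\Delta_i\ge \E\bigl|\E[D_i\mid X_i]-\text{(linear part)}\bigr|^2$ is not useful directly, since $\E[D_i\mid X_i]=0$. So instead I use Poincar\'e for $X_i$ conditional on $S_{i-1}$:
\[
\Var(\cdot)\ \ge\ \frac1R\times(\text{gradient})^{-1}
\]
is the wrong direction, so Poincar\'e must enter via its dual form. For mean-zero $\phi$,
\[
\E|\phi(X)-\E[\nabla\phi]X|^2\le R\,\E|\nabla\phi-\E\nabla\phi|^2\cdot(\ldots),
\]
and I apply it to $\phi=g$. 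Combining this with the score covariance bound should produce exactly $\frac{i-1}{n I(X_1)R}\E|g(X_1)-MX_1|^2$.

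\textbf{Main obstacle, and the formula \eqref{e:M=FMatrix}.} The main obstacle is the middle step: identifying the right test function so that the Fisher factor $I(X_1)/(i-1)$ and the Poincar\'e factor $R$ appear together with $(g-MX)$. I expect to bound $\E|\nabla g-M|^2$ by the $u$-derivative of $D_i$ through Lemma~\ref{lem:score-shift}, and then use Poincar\'e to pass to $\E|g-MX|^2$.

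For \eqref{e:M=FMatrix}, with $f=\rho_{Z_n}$ we have $\nabla g(u)=\E[\nabla\rho_{Z_n}((S_{n-1}+u)/\sqrt n)]$. Hence $M=\E[\nabla\rho_{Z_n}(Z_n)]$, and integration by parts gives $\E[\nabla\rho_{Z_n}(Z_n)]=-\E[\rho_{Z_n}\rho_{Z_n}^\top]=-I_M(Z_n)$. Alternatively, by Lemma~\ref{lem:score-shift} and \eqref{eq-proj-m}, $M=-\sqrt n\,\E[f(Z_n)\rho_{X_1}(X_1)^\top]/\sqrt n$ reduces to the same quantity.
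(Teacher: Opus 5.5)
Your telescoping argument for \eqref{e:ProjError1} is correct and is the paper's argument. Your identification $M=-I_M(Z_n)$ via $\E[\nabla\rho_{Z_n}(Z_n)]=-I_M(Z_n)$ is also acceptable, but it requires justifying that $\rho_{Z_n}$ can be differentiated and that the boundary terms vanish. The paper avoids this. It writes $M=\E[\nabla g(X_1)]=-\E[g(X_1)\rho_{X_1}(X_1)^\top]$ using Lemma~\ref{lem:score-shift}, inserts $g(X_1)=\sqrt n\,\E[\rho_{Z_n}(Z_n)\mid X_1]$, and then uses \eqref{eq-proj-m} together with $\rho_{S_n}(S_n)=\rho_{Z_n}(Z_n)/\sqrt n$. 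Your ``alternative'' formula carries a spurious factor $1/\sqrt n$.

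The genuine gap is the lower bound \eqref{e:ProjError2}, which you leave open yourself (``the main obstacle''). Your concrete attempt uses the wrong test variable. With $W=\rho_{S_{i-1}}(S_{i-1})^\top(g(X_i)-MX_i)$, the covariance is
\[
\E[D_i^{(j)}W]=-\frac{1}{\sqrt n}\E\bigl[(\nabla g^{(j)}(X_1)-M_j)\cdot(g(X_1)-MX_1)\bigr],
\]
where $M_j$ is the $j$-th row of $M$. This is a mixed quantity that is not sign-definite, so it cannot be bounded below by $\E|g(X_1)-MX_1|^2$.

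The missing idea is to apply Cauchy--Schwarz \emph{conditionally on $X_i$}, against the score alone, using the identity
\[
\E\bigl[D_i\,\rho_{S_{i-1}}(S_{i-1})^\top\,\big|\,X_i=u\bigr]=-\frac{1}{\sqrt n}\bigl(\nabla g(u)-M\bigr).
\]
This holds term by term:
\begin{itemize}
\item the $F_i$ term gives $-\nabla g(u)/\sqrt n$, by Lemma~\ref{lem:score-shift} applied to $S_{i-1}$;
\item the $g(X_i)$ term vanishes, since $\E\rho_{S_{i-1}}(S_{i-1})=0$;
\item the $F_{i-1}$ term is a constant, equal to $-M/\sqrt n$ by the tower property and the same lemma.
\end{itemize}

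Applying Cauchy--Schwarz row by row and using $I(S_{i-1})\le I(X_1)/(i-1)$ gives
\[
\Delta_i\ge\frac{i-1}{nI(X_1)}\,\E\|\nabla g(X_1)-M\|_F^2 .
\]
Only at this point does Poincar\'e enter, in its ordinary direction, with no ``dual form'' and no extra factor. Apply it to the mean-zero coordinates $u\mapsto g^{(j)}(u)-M_j\cdot u$ to get $\E|g(X_1)-MX_1|^2\le R\,\E\|\nabla g(X_1)-M\|_F^2$. Combining the two displays gives \eqref{e:ProjError2}.

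The paper does exactly this, except that it pairs with $\sum_{k<i}\rho_{X_k}(X_k)$ instead of $(i-1)\rho_{S_{i-1}}(S_{i-1})$. By exchangeability and \eqref{eq-proj-m}, that choice yields the factor $(i-1)I(X_1)$ directly, without invoking Stam's inequality.
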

\begin{proof}
Let us first prove \eqref{e:ProjError1}.
For $m=1,\dots,n$, denote
$$I_m=f_m\Bigl(\frac{S_m}{\sqrt n}\Bigr)-\frac1{\sqrt n}\sum_{i=1}^m g(X_i), \quad \delta_m=f_m\Bigl(\frac{S_m}{\sqrt n}\Bigr)-f_{m-1}\Bigl(\frac{S_{m-1}}{\sqrt n}\Bigr)-\frac1{\sqrt n}g(X_m).$$
It is clear to see that for $m=2,3,\dots,n$,
\begin{align*}
I_m=I_{m-1}+\delta_m, \quad \ \Delta_m=\E\big[|\delta_m|^2\big]. 
\end{align*}
We have 
\begin{equation} \label{e:DecStep1}
	\E \big[|I_n|^2\big]=\E \big[|I_{n-1}|^2\big]+\Delta_n+\E[I_{n-1} \cdot \delta_n]
    =\E \big[|I_{n-1}|^2\big]+\Delta_n,
\end{equation}
since $\E[I_{n-1} \cdot \delta_n]=0$ by the following argument: we have $\E[\delta_n|X_1,\dots,X_{n-1}]=0$ by a straightforward computation and thus 
\begin{equation*}
\E[I_{n-1} \cdot \delta_n]=\E\big[\E[\delta_n|X_1,\dots,X_{n-1}]\cdot I_{n-1}\big]=0.
\end{equation*}
Repeating the argument \eqref{e:DecStep1}, we obtain 
\begin{align*}
	\E \big[|I_n|^2\big]=\E \big[|I_{n-1}|^2\big]+\Delta_n=\dots=\E\big[|I_1|^2\big]+\sum_{i=2}^n \Delta_i.
\end{align*}
Combining this with the fact that
$$
\E\big[|I_1|^2\big]=\E\left[\Bigl|f_1\Bigl(\frac{S_1}{\sqrt n}\Bigr)-\frac1{\sqrt n}g(X_1)\Bigr|^2\right]=0,
$$ 
we immediately obtain \eqref{e:ProjError1}. 

Let us now prove \eqref{e:ProjError2}. 
	Write $f=(f^{(1)},\dots,f^{(d)})$ and $g=(g^{(1)},\dots,g^{(d)})$. For $j=1,\dots,d$, set
	\[	\Delta_i^{(j)}:=\E\left[\Bigl(f_i^{(j)}\Bigl(\frac{S_i}{\sqrt n}\Bigr)-f_{i-1}^{(j)}\Bigl(\frac{S_{i-1}}{\sqrt n}\Bigr)-\frac1{\sqrt n}g^{(j)}(X_i)\Bigr)^2\right],
	\]
	so that $\Delta_i=\sum_{j=1}^d \Delta_i^{(j)}$. Similarly, we have $$\E\left[|g(X_1)-MX_1|^2\right]=\sum_{j=1}^d\E\left[\big(g^{(j)}(X_1)-\E[\nabla g^{(j)}(X_1)]\cdot X_1\big)^2\right].$$
Therefore, it suffices to prove the lower bound for each coordinate
$j\in\{1,\dots,d\}$ and then sum over $j$.

\emph{Step 1. Auxiliary function and Cauchy--Schwarz bound}:
    For a fixed $1 \le j \le d$ and all $2 \le i \le n$, define the $\mathbb{R}^d$-valued function, for $u \in \mathbb{R}^d$, 
	\begin{equation*}
	r_j(u):=\E\Bigl[\Bigl(f_i^{(j)}\Bigl(\frac{S_{i-1}+u}{\sqrt n}\Bigr)-f_{i-1}^{(j)}\Bigl(\frac{S_{i-1}}{\sqrt n}\Bigr)-\frac1{\sqrt n}g^{(j)}(u)\Bigr)\sum_{k=1}^{i-1}\rho_{X_k}(X_k)\Bigr].
	\end{equation*}
	By the Cauchy--Schwarz inequality, the i.i.d. property, and $\E[\rho_{X_k}(X_k)]=0$, we get
	\begin{equation*}
		\E\left[|r_j(X_i)|^2\right]
		\le \Delta_i^{(j)}\E\left[\Bigl|\sum_{k=1}^{i-1}\rho_{X_k}(X_k)\Bigr|^2\right]
		= (i-1) \Delta_i^{(j)} I(X_1),
	\end{equation*}
and thus 
\begin{equation}\label{eq:succproj-Cauchy}
		\Delta_i^{(j)} \ge \frac{\E\left[|r_j(X_i)|^2\right]}{(i-1)I(X_1)}.
	\end{equation}

   \emph{Step 2. A simple form of $r_j(u)$ by exchangeability}:
		By exchangeability of $X_1,\dots,X_{i-1}$ and $\E[\rho_{X_k}(X_k)]=0$, we have
	\begin{align}\label{eq:rj-sym}
		r_j(u)
		= (i-1)\E\Bigl[\Bigl(f_i^{(j)}\Bigl(\frac{S_{i-1}+u}{\sqrt n}\Bigr)-f_{i-1}^{(j)}\Bigl(\frac{S_{i-1}}{\sqrt n}\Bigr)\Bigr)\rho_{X_1}(X_1)\Bigr].
	\end{align}
	From the definition of $g$ and the exchangeability,
	\[
	\frac1{\sqrt n}g^{(j)}(u)=\E \left[f_i^{(j)}\Bigl(\frac{S_{i-1}+u}{\sqrt n}\Bigr)\right].
	\]
	Applying Lemma \ref{lem:score-shift} to $S_{i-1}$ and $f_i^{(j)}(\cdot/\sqrt n)$ yields
	\begin{equation*}
		\nabla\Bigl(\frac1{\sqrt n}g^{(j)}(u)\Bigr)
		=-\E\Bigl[f_i^{(j)}\Bigl(\frac{S_{i-1}+u}{\sqrt n}\Bigr)\rho_{S_{i-1}}(S_{i-1})\Bigr].
	\end{equation*}
	Thus, by \eqref{eq-proj-m}, the first term in \eqref{eq:rj-sym} is
	\begin{equation}\label{eq:first-term-result}
    \begin{aligned}
		(i-1)\E\Bigl[f_i^{(j)}\Bigl(\frac{S_{i-1}+u}{\sqrt n}\Bigr)\rho_{X_1}(X_1)\Bigr]&=(i-1)\E\Bigl[f_i^{(j)}\Bigl(\frac{S_{i-1}+u}{\sqrt n}\Bigr)\rho_{S_{i-1}}(S_{i-1})\Bigr]\\&=-(i-1)\nabla\Bigl(\frac1{\sqrt n}g^{(j)}(u)\Bigr).
         \end{aligned}
	\end{equation}
For the second term in \eqref{eq:rj-sym}, we have
	\begin{equation}
\label{eq:second-term-result}
\begin{split}
(i-1)\E\Bigl[f_{i-1}^{(j)}\Bigl(\frac{S_{i-1}}{\sqrt n}\Bigr)\rho_{X_1}(X_1)\Bigr] 
&
=(i-1)\E\Bigl[f_i^{(j)}\Bigl(\frac{S_{i-1}+X_i}{\sqrt n}\Bigr)\rho_{X_1}(X_1)\Bigr]\\
&=-\frac{i-1}{\sqrt n}\E[\nabla g^{(j)}(X_1)].
\end{split}
	\end{equation}
 Inserting \eqref{eq:first-term-result} and \eqref{eq:second-term-result} into \eqref{eq:rj-sym}, we obtain
	\begin{equation*}
		r_j(u)=-\frac{i-1}{\sqrt n}\bigl(\nabla g^{(j)}(u)-\E[\nabla g^{(j)}(X_1)]\bigr).
	\end{equation*}
    
\emph{Step 3. A lower bound via the Poincar\'e inequality and the final sum}:
    By the simple form of $r_j(u)$, we have
	\begin{equation*}
		\E\left[|r_j(X_i)|^2\right]=\frac{(i-1)^2}{n}\E\left[|\nabla g^{(j)}(X_1)-\E[\nabla g^{(j)}(X_1)]|^2\right].
	\end{equation*}
	Together with \eqref{eq:succproj-Cauchy}, this implies
	\[
	\Delta_i^{(j)}\ge\frac{i-1}{nI(X_1)}\E\left[|\nabla g^{(j)}(X_1)-\E[\nabla g^{(j)}(X_1)]|^2\right].
	\]
	Since $\E [g(X_1)]=\sqrt n\E [f(Z_n)]=0$ and $\E X_1=0$, we have $$\E[g^{(j)}(X_1)-\E[\nabla g^{(j)}(X_1)]\cdot X_1]=0.$$
	Applying the Poincar\'e inequality to $\varphi_j(u):=g^{(j)}(u)-\E[\nabla g^{(j)}(X_1)]\cdot u$ yields
	\[
	\E\left[\bigl(g^{(j)}(X_1)-\E[\nabla g^{(j)}(X_1)]\cdot X_1\bigr)^2\right]
	\le R\E\left[|\nabla g^{(j)}(X_1)-\E[\nabla g^{(j)}(X_1)]|^2\right].
	\]
	Therefore,
	\[
	\Delta_i^{(j)}\ge\frac{i-1}{nI(X_1)R}\E\left[\big(g^{(j)}(X_1)-\E[\nabla g^{(j)}(X_1)]\cdot X_1\big)^2\right].
	\]
	Summing over $j=1,\dots,d$ gives
	\[
	\Delta_i\ge\frac{i-1}{nI(X_1)R}\E\left[|g(X_1)-M X_1|^2\right],
	\]
where $M=\mathbb{E}[\nabla g(X_1)]$.

It remains to show as $f(\cdot)=\rho_{Z_n}(\cdot)$, we have $M=\mathbb{E}[\nabla g(X_1)]=-I_M(Z_n)$. Indeed, by \eqref{eq:second-term-result} with $i=2$ and noticing $g(X_1)=\sqrt{n} f_1(\frac{X_1}{\sqrt n})$, we have
\begin{equation*}
M=\E [\nabla g(X_1)]=-\E [g(X_1)\rho_{X_1}(X_1)^\top] =-\sqrt n\E[\rho_{Z_n}(Z_n)\rho_{X_1}(X_1)^\top].
\end{equation*}
This, together with \eqref{eq-proj-m} and the relation $\rho_{Z_n}(Z_n)=\sqrt{n} \rho_{S_n}(S_n)$, gives 
\begin{equation*}
\begin{split}
M=-\sqrt n\E[\rho_{Z_n}(Z_n)\rho_{S_n}(S_n)^\top]=-\E[\rho_{Z_n}(Z_n)\rho_{Z_n}(Z_n)^\top]=-I_M(Z_n), 
\end{split}
\end{equation*}
which completes the proof.
\end{proof}

	\subsection{Decay of the relative Fisher information}\label{subsec-FI}

    	\begin{proposition}\label{Prop-FI}
		Let $X_1,X_2,\dots,X_n$ be i.i.d.\ $\R^d$-valued random vectors with absolutely continuous density,
		$\E X_1=0$, $\Cov(X_1)=I_d$, and Poincar\'e constant $C_P(X_1)=R$. 
		Let $Z_n=\frac1{\sqrt n}\sum_{i=1}^n X_i$. Then for all $n\geq1$,
		\[
		J(Z_n)\leq \frac{2dR}{2dR+(n-1)}J(X_1).
		\]
	\end{proposition}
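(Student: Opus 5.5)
We follow the one-dimensional Johnson--Barron scheme, using Lemma \ref{lem:successive-tele} with $f=\rho_{Z_n}$. The case $n=1$ is trivial, and if $J(X_1)=\infty$ there is nothing to prove, so we assume $n\ge2$ and $I(X_1)<\infty$. The plan is to write the score of $Z_n$ as the ridge projection of the additive score, then measure how far the additive score is from the ridge subspace.

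\emph{Step 1: Pythagorean identity.} By Lemma \ref{lem:score-sum} and the scaling of scores, $\rho_{Z_n}(Z_n)=\E[\frac{1}{\sqrt n}\sum_i\rho_{X_i}(X_i)\mid Z_n]$. Set $U:=\frac1{\sqrt n}\sum_{i}\bigl(\rho_{X_i}(X_i)+X_i\bigr)$, which lies in $L^2_{\rm add}$, and $V:=\rho_{Z_n}(Z_n)+Z_n$, which lies in $L^2_{\rm ridge}$. Since $\E[X_i\mid Z_n]$ sums to $\sqrt n Z_n$, we get $V=\E[U\mid Z_n]$, and therefore
\[
\E|U|^2=\E|V|^2+\E|U-V|^2,\qquad \E|U|^2=J(X_1),\quad \E|V|^2=J(Z_n).
\]
So $J(X_1)-J(Z_n)=\E|U-V|^2$. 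Because $U$ is additive, this is at least the squared distance from $V$ to the additive subspace, namely $\E|V-\Pi_{\rm add}V|^2$.

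\emph{Step 2: apply Lemma \ref{lem:successive-tele}.} Take $f(x)=\rho_{Z_n}(x)+x$, which is centered. Then $\Pi_{\rm add}V=\frac1{\sqrt n}\sum_i g(X_i)$. Summing \eqref{e:ProjError2} over $i$ gives $\sum_{i=2}^n(i-1)=n(n-1)/2$, hence
\[
J(X_1)-J(Z_n)\ \ge\ \frac{n-1}{2 I(X_1) R}\,\E|g(X_1)-MX_1|^2 .
\]
Here $M=\E\nabla g(X_1)$. Writing $g=g_\rho+g_{\rm id}$, the $\rho_{Z_n}$ part gives $-I_M(Z_n)$ by \eqref{e:M=FMatrix}, and the linear part $x\mapsto x$ gives $g_{\rm id}(u)=u$ with gradient $I_d$. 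So $M=I_d-I_M(Z_n)$.

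\emph{Step 3: lower bound for $\E|g(X_1)-MX_1|^2$.} This is the main obstacle. The quantity is a projection error, and we need it to be comparable to $J(Z_n)$. In 1D Johnson--Barron use $\E[g(X_1)\,(\rho_{X_1}+X_1)]$-type identities. Here we compute
\[
\E[g(X_1)\cdot(\rho_{X_1}(X_1)+X_1)]=\sqrt n\,\E[V\cdot(\rho_{X_1}+X_1)],
\]
and by \eqref{eq-proj-m} together with symmetry this equals $\E|V|^2=J(Z_n)$. Also $\E[MX_1\cdot(\rho_{X_1}+X_1)]=\tr(M(-I_d+I_d))=0$ by Lemma \ref{lem:score-moments}. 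Cauchy--Schwarz then yields
\[
J(Z_n)^2\le \E|g(X_1)-MX_1|^2\, J(X_1).
\]
Hence
\[
J(X_1)-J(Z_n)\ge \frac{(n-1)J(Z_n)^2}{2RI(X_1)J(X_1)}.
\]
This quadratic bound would only give a rate of $n^{-1/2}$, so we must be more careful. Instead, we try to bound the first factor with a factor $1/d$, using $I(X_1)=d+J(X_1)$. The plan is to get the linear inequality $J(X_1)-J(Z_n)\ge \frac{n-1}{2dR}J(Z_n)$. To do so, we would first try to show $\E|g(X_1)-MX_1|^2\ge \frac{I(X_1)}{d}J(Z_n)$, perhaps by a coordinatewise Cauchy--Schwarz against $\rho_{X_1}^{(j)}$, exploiting $\tr I_M(Z_n)=I(Z_n)\ge d$. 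This is the step we expect to require the most work. If it holds, rearranging gives $J(Z_n)\bigl(1+\frac{n-1}{2dR}\bigr)\le J(X_1)$, which is exactly the stated bound.
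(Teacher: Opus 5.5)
\textbf{There is a genuine gap, in Step 3.} Your Steps 1--2 are sound. Taking $f=\rho_{Z_n}+\mathrm{id}$ in Lemma \ref{lem:successive-tele} is harmless, since $g(X_1)-MX_1$ is the same vector as with $f=\rho_{Z_n}$. Your Cauchy--Schwarz computation in Step 3 is also correct. The problem is the inequality you propose to close the argument,
\[
\E|g(X_1)-MX_1|^2\ge \frac{I(X_1)}{d}J(Z_n),
\]
which is false in general. With your $f$ one has $\E[g(X_1)X_1^\top]=\E[VZ_n^\top]=0$, so
\[
\E|g(X_1)-MX_1|^2=\E|\Pi_{\rm add}V|^2+\|I_M(Z_n)-I_d\|_F^2\le J(Z_n)+\|I_M(Z_n)-I_d\|_F^2 .
\]
Suppose $X_1$ has i.i.d.\ non-Gaussian coordinates. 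Then $I_M(Z_n)-I_d=\frac{J(Z_n)}{d}I_d$, so the right side equals $J(Z_n)+J(Z_n)^2/d$. The quantity you need is $\frac{I(X_1)}{d}J(Z_n)=J(Z_n)+\frac{J(X_1)J(Z_n)}{d}$, and the right side is strictly smaller than it as soon as $0<J(Z_n)<J(X_1)$ (e.g.\ for large $n$). The underlying issue is structural. Your lower bounds on the projection error come from pairing against an object whose inner product with $V$ is $J(Z_n)$, so they are quadratic in $J(Z_n)$. Your upper bound $J(X_1)-J(Z_n)$, however, carries no factor of $J(Z_n)$.

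\textbf{The missing idea is in Step 1.} The bound $\E|V-\Pi_{\rm add}V|^2\le\E|U-V|^2=J(X_1)-J(Z_n)$ is too lossy. Instead, compute the projection error exactly, writing $\widehat V=\Pi_{\rm add}V$:
\[
\E|V-\widehat V|^2=\E|V|^2-\E|\widehat V|^2 .
\]
Since $U$ is additive, $\langle U,\widehat V\rangle=\langle U,V\rangle=J(Z_n)$. Cauchy--Schwarz then gives $\E|\widehat V|^2\ge J(Z_n)^2/J(X_1)$, hence
\[
\E|V-\widehat V|^2\le J(Z_n)-\frac{J(Z_n)^2}{J(X_1)}=\frac{J(Z_n)}{J(X_1)}\bigl(J(X_1)-J(Z_n)\bigr).
\]
The extra factor $J(Z_n)/J(X_1)$ is exactly what makes a quadratic lower bound sufficient.

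\textbf{The lower bound then needs both orthogonal pieces.} For the first piece, use your own Cauchy--Schwarz bound $\E|\Pi_{\rm add}V|^2\ge J(Z_n)^2/J(X_1)$. For the second, use the trace inequality
\[
\|I_M(Z_n)-I_d\|_F^2\ge\frac{\bigl(\tr(I_M(Z_n)-I_d)\bigr)^2}{d}=\frac{J(Z_n)^2}{d}.
\]
With $I(X_1)=d+J(X_1)$, these add up to $\E|g(X_1)-MX_1|^2\ge\frac{I(X_1)}{dJ(X_1)}J(Z_n)^2$. Feed both estimates into Lemma \ref{lem:successive-tele} and divide by $J(Z_n)/J(X_1)$; this gives exactly $J(X_1)-J(Z_n)\ge\frac{n-1}{2dR}J(Z_n)$. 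The Frobenius term is also essential. Without it, the same argument yields only $J(Z_n)\le\frac{2RI(X_1)}{2RI(X_1)+n-1}J(X_1)$, with $I(X_1)$ in place of $d$.
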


\begin{remark}
As noted in Remark \ref{comparem}\,\ref{compadfh3}, recent progress on the slicing problem yields dimension-dependent bounds for the relative entropy. More precisely, if  $X_1$ is  log-concave, it holds from \cite[Equation (3) and Theorem 1.2]{KL25} that
   \[
	\Ent(X_1\mid Z)\le Cd.
	\]  
Unlike the relative entropy $\Ent(X_1\mid Z)$, however, we can not expect a general and explicit upper bound for $J(X_1)$ in $d$, see Examples \ref{exam1} and \ref{exam2} below.
\end{remark}

\begin{example}\label{exam1}
For $\beta>1$, let $X_\beta$ be the one-dimensional random variable with density
\[
q_\beta(x)=c_\beta\exp\left(-b_\beta|x|^\beta\right),
\]
where
\[c_\beta:= \frac{\beta}{2\Gamma(1/\beta)} \sqrt{\frac{\Gamma(3/\beta)}{\Gamma(1/\beta)}},\quad b_\beta:=\left(\frac{\Gamma(3/\beta)}{\Gamma(1/\beta)}\right)^{\beta/2}.
\]
Then $\E X_\beta=0$, $\Var(X_\beta)=1$, and \[
J(X_\beta)
=\beta^2\frac{\Gamma(3/\beta)\Gamma(2-1/\beta)}{\Gamma(1/\beta)^2}-1.
\]
As $\beta \to \infty$, it follows that $J(X_\beta) \sim \beta/3$.  
For each $d\ge1$, let $X=(X_{\beta}^{(1)},\dots,X_{\beta}^{(d)})$ have i.i.d. coordinates $X_{\beta}^{(i)}\overset{Law}{=} X_{\beta}$, $i=1,\dots,d$. Then
$\E X=0$, $\Cov(X)=I_d$ and $X$ is log-concave.
By the additivity of the relative Fisher information for product measures,
\[
J(X)
=dJ(X_{\beta})
\sim \frac{\beta}{3}d,\quad \beta \to\infty.
\]
By choosing  $\beta=\beta(d)\sim  3d$ as $d\to\infty$, we obtain $J(X)\sim d^2$. More generally, by allowing $\beta(d)$ to grow faster than linearly in $d$, one can make
$J(X)$ grow faster than quadratically.
\end{example}
It is seen from Example \ref{exam1} that, even within the class of centered, isotropic, log-concave random variables, the relative Fisher information can grow linearly or superlinearly with respect to the dimension. Since $\Gamma(t)=1/t-\gamma+o(1)$ as $t\downarrow0$, where $\gamma=0.5772\dots$ is the Euler--Mascheroni constant, it is not hard to verify that
\[
        \lim_{\beta\to\infty}q_\beta(x)=
        \begin{cases}
        (2\sqrt{3})^{-1},&|x|<\sqrt{3},\\
        (2\sqrt{3})^{-1}\exp\left(-e^{-\gamma}\right),&|x|=\sqrt{3},\\
        0,& |x|>\sqrt{3}.
        \end{cases}
\]
In other words, as $\beta\to\infty$, the pointwise limit of $q_\beta$ can be regarded as the uniform density on $[-\sqrt{3},\sqrt{3}]$.
Thus, Example \ref{exam1} presents a sequence of smooth log-concave laws approaching the uniform distribution, whose relative Fisher information is
infinite.

  Next, we give another example in which the relative Fisher information grows arbitrarily fast in $d$. The construction is based on Student's $t$-distribution, which is not log-concave.

\begin{example}\label{exam2}
For $\theta>2$, let $X_\theta$ be the one-dimensional random variable with density
\[
f_\theta(x)
=c_\theta\Bigl(1+\frac{x^2}{\theta-2}\Bigr)^{-(\theta+1)/2},
\quad
c_\theta:=\frac{\Gamma((\theta+1)/2)}{\sqrt{\pi(\theta-2)}\Gamma(\theta/2)}.
\]
Then $\E X_\theta=0$, $\Var(X_\theta)=1$,
and 
\[
J(X_\theta)=\frac{6}{(\theta-2)(\theta+3)}.
\]
Note that $J(X_\theta)$ is strictly decreasing in $\theta\in(2,\infty)$ with $J(X_\theta)\to\infty$ as $\theta\downarrow2$ and
$J(X_\theta)\to0$ as $\theta\to\infty$.
Therefore, for any $k>0$, we can choose $\theta(k)$ such that $J(X_{\theta(k)})=k$. 
For each $d\ge1$, let $X=(X_{\theta(k)}^{(1)},\dots,X_{\theta(k)}^{(d)})$ have i.i.d. coordinates $X_{\theta(k)}^{(i)}\overset{Law}{=} X_{\theta(k)}$, $i=1,\dots,d$. Then
$\E X=0$, $\Cov(X)=I_d$, and
\[
J(X)=kd.
\]
Since $k>0$ is arbitrary, by allowing it to depend on $d$, the relative Fisher information $J(X)$ can be made to exhibit any prescribed dependence on the dimension $d$.
\end{example}

\begin{proof}[Proof of Proposition \ref{Prop-FI}]
We shall follow the elegant Johnson--Barron projection method and split the proofs into the following steps. 

	\emph{Step 1. Projection to the ridge field:}
Define the additive field $U$ and the ridge field $V$ as follows:
\[ U = \frac{1}{\sqrt{n}} \sum_{i=1}^{n} \big(\rho_{X_{i}}(X_{i}) + X_{i}\big), \quad \quad V = \rho_{Z_{n}}(Z_{n}) + Z_{n}, \]
where $Z_n=\frac{X_1+X_2+\dots+X_n}{\sqrt{n}}$. 
By the i.i.d. and mean zero property, we have
$$\mathbb{E}[|U|^{2}] = J(X_{1}), \quad \quad
    \mathbb{E}[|V|^{2}] = J(Z_{n}).$$
By Lemma \ref{lem:score-sum} with straightforward computation, we get $$V = \mathbb{E}[U \mid Z_{n}],$$ identifying $V$ as the orthogonal projection of $U$ onto the ridge subspace $L_{ridge}^{2}$. From the properties of projections, it is immediate that $\mathbb{E}[|V|^{2}] \le \mathbb{E}[|U|^{2}]$, yielding the monotonic decay $$J(Z_{n}) \le J(X_{1}).$$ 

\emph{Step 2. Projection of ridge field:}
We now project $V$ back onto the additive subspace $L_{add}^{2}$. Let $\widehat{V} = \Pi_{add} V$, which reads as
\[ \widehat{V} = \sum_{i=1}^{n} \mathbb{E}[V \mid X_{i}].\]
By the symmetry of the i.i.d. summands, each term $\mathbb{E}[V \mid X_{i}]$ takes the same form. Recalling that $g(X_{i}) = \sqrt{n} \mathbb{E}[\rho_{Z_{n}}(Z_{n}) \mid X_{i}]$, then
\[ \widehat{V} = \frac{1}{\sqrt{n}} \sum_{i=1}^{n} (g(X_{i}) + X_{i}). \]
The difference $V - \widehat{V}$ is orthogonal to the additive subspace by construction. Furthermore, because $V$ is a projection of $U$, the geometry dictates that 
\[
	\langle U,\widehat V\rangle=\langle U,V\rangle=\langle V,V\rangle=\E[|V|^2]=J(Z_n).
	\]
Applying the Cauchy--Schwarz inequality, we obtain a crucial lower bound on the norm of the additive projection:
\begin{equation} \label{e:HatVEst}
 \mathbb{E}[|\widehat{V}|^{2}] \ge \frac{\langle U, \widehat{V} \rangle^{2}}{\mathbb{E}[|U|^{2}]} = \frac{J(Z_{n})^{2}}{J(X_{1})}.
\end{equation}

\emph{Step 3. Decomposition of the error $V - \widehat{V}$ and an inequality of $J(Z_n)$:} It is obvious that 
\[
V-\widehat V
=\rho_{Z_n}(Z_n)-\frac1{\sqrt n}\sum_{i=1}^n g(X_i).
\]
Recall \eqref{e:fnSeq} and define the functions recursively by 
$$f_{n}(\cdot)= \rho_{Z_{n}}(\cdot),$$ and 
$$f_{m}(\cdot) = \mathbb{E}\left[f_{m+1}\left(\cdot + \frac{X_{m+1}}{\sqrt{n}}\right)\right], \quad \quad m=n-1,\dots,1.$$ 
On the one hand, by Lemma \ref{lem:successive-tele}, we have
\begin{equation*}
\begin{split}
	\E\big[|V-\widehat V|^2\big]
	& =\E\left[\Big|\rho_{Z_n}(Z_n)-\frac1{\sqrt n}\sum_{i=1}^n g(X_i)\Big|^2\right]  \\
 &\ge \sum_{i=2}^n  \frac{i-1}{n I(X_1)R}\,
\E\left[|g(X_1)-M X_1|^2\right] \\
& \ge \frac{n-1}{2R I(X_1)}\E\left[|g(X_1)-M X_1|^2\right],
\end{split}
\end{equation*}
where we recall $M= -I_M(Z_n)\in\R^{d\times d}$. 
On the other hand, 
Pythagoras' Theorem for the orthogonal projection of $V$ onto
	$L^2_{\rm add}$ then yields the upper bound
	\begin{equation*}
		\E\big[|V-\widehat V|\big]^2
		=\E[|V|^2]-\E[|\widehat V|^2]
		\le J(Z_n)-\frac{J(Z_n)^2}{J(X_1)}, 
 \end{equation*}
where the last inequality is by \eqref{e:HatVEst}. 
Hence,
\begin{equation} \label{e:JnTwoSides}
J(Z_n)-\frac{J(Z_n)^2}{J(X_1)} \ge \frac{n-1}{2R I(X_1)}\E\left[|g(X_1)-M X_1|^2\right].
\end{equation}

\emph{Step 4. Estimate of $\E|g(X_1)-M X_1|^2$:}
By the definition of $g$, the symmetry of the summand and Lemma \ref{lem:score-moments}, we get
\begin{equation*}
\begin{split}
\E\bigl[X_1 g(X_1)^\top\bigr]
&=\sqrt n\E\bigl[X_1\rho_{Z_n}(Z_n)^\top\bigr]=\sqrt n\E\left[\frac{X_1+\dots+X_n}n\rho_{Z_n}(Z_n)^\top\right]\\
&=\E\left[Z_n\rho_{Z_n}(Z_n)^\top\right]=-I_d.
\end{split}
\end{equation*}
Since $\operatorname{Cov}(X_1)=I_d$, we have
\begin{equation*}
\E\bigl[(g(X_1)+X_1)X_1^\top\bigr]=0,
\end{equation*}
which immediately implies the following $L^2$-decomposition:
\begin{equation}\label{eq-decomposition}
    \E\left[|g(X_1)-M X_1|^2\right]
=\E\left[|g(X_1)+X_1|^2\right]
+\E\left[|(M+I_d)X_1|^2\right].
\end{equation}
By the definition of $\hat V$ and \eqref{e:HatVEst}, we obtain
\begin{equation}\label{eq:lower-part1}
	\E\left[|g(X_1)+X_1|^2\right]
	= \E[|\widehat V|^2]
	\ge \frac{J(Z_n)^2}{J(X_1)}.
\end{equation}
Since $\E[X_1]=0$ and $\Cov(X_1)=I_d$, we get
\[
\E\left[|(M+I_d)X_1|^2\right]
=\tr\bigl((M+I_d)^\top(M+I_d)\bigr)
=\|M+I_d\|_F^2,
\]
where $\|A\|_F:=(\tr(A^\top A))^{1/2}$ is the Frobenius norm.  Applying the Cauchy--Schwarz inequality in the form $\|A\|_F^2 \ge (\tr A)^2/d$ and recalling that $M = -I_M(Z_n)$, we find
\begin{equation}\label{eq:lower-part2}
	\|M+I_d\|_F^2
	\ge \frac{\left(\tr(M+I_d)\right)^2}{d}= \frac{J(Z_n)^2}{d}, 
\end{equation} 
where the last equality follows from the fact that 
$$J(Z_n)=I(Z_n)-d=-\tr(M)-d.$$ 

Putting \eqref{eq-decomposition}, \eqref{eq:lower-part1} and \eqref{eq:lower-part2} together, we obtain
\begin{equation} \label{e:g-MBound}
\E\left[|g(X_1)-M X_1|^2\right]
\ge \frac{J(Z_n)^2}{J(X_1)}+\frac{J(Z_n)^2}{d}.
\end{equation}

\emph{Step 5. Conclusion of the proof:}
Combining \eqref{e:g-MBound} and \eqref{e:JnTwoSides}, we get
\[
J(Z_n)-\frac{J(Z_n)^2}{J(X_1)}
 \ge \frac{n-1}{2R I(X_1)}
\left(\frac{J(Z_n)^2}{J(X_1)}+\frac{J(Z_n)^2}{d}\right)=\frac{n-1}{2RdJ(X_1)}J(Z_n)^2,
\]
and thus Proposition \ref{Prop-FI} follows immediately.
\end{proof}

	\subsection{Proof of Theorem \ref{thm-1}  }\label{subsec-ent}	
	\ To prove Theorem \ref{thm-1} based on Proposition \ref{Prop-FI}, we need the following lemma on the CLT in Wasserstein-2 distance.
\begin{lemma}[{\cite[Theorem 6]{CFP19}}]\label{lem:W2-Poincare}
	Let $X_1,\dots,X_n$ be i.i.d. $\R^d$-valued random vectors with $\E X_1=0$ and $\Cov (X_1)= I_d$.
	Assume that the law of $X_1$ satisfies a Poincar\'e inequality with constant $C_P(X_1)=R$, then
	\[
	W_2^2(Z_n,Z)\le \frac{d(R-1)}{n}.
	\]
\end{lemma}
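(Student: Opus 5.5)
The approach is a Stein-kernel argument. Lemma \ref{lem:W2-Poincare} is quoted from \cite{CFP19}, so I would reproduce its three ingredients in order.

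\emph{Step 1: a Stein kernel for $X_1$ from the Poincar\'e inequality.} Let $p=p_{X_1}$ and let $L f=\Delta f+\nabla\log p\cdot\nabla f$ be the generator symmetric with respect to $p$. By the Poincar\'e inequality, $L$ has a spectral gap on mean-zero functions. Since $\E X_1^{(j)}=0$, I can solve $-Lv_j=x_j$ with $v_j\in H^1(p)$ for each $j=1,\dots,d$, and set $\tau(x):=(\nabla v_1(x),\dots,\nabla v_d(x))^\top$. Integration by parts gives
\[
\E[X_1\,\varphi(X_1)^\top]=\E[\tau(X_1)\nabla\varphi(X_1)^\top]
\]
for smooth $\varphi$, so $\tau$ is a Stein kernel. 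Taking $\varphi(x)=x$ gives $\E\tau=\Cov(X_1)=I_d$.

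\emph{Step 2: bounding the Stein discrepancy of $X_1$.} We have $\E|\nabla v_j|^2=\E[X_1^{(j)}v_j]$. Centering $v_j$ and using Cauchy--Schwarz together with $\Var(X_1^{(j)})=1$ and the Poincar\'e inequality gives
\[
\E|\nabla v_j|^2\le \sqrt{\Var(v_j)}\le \sqrt{R\,\E|\nabla v_j|^2},
\]
hence $\E|\nabla v_j|^2\le R$. Summing over $j$ gives $\E\|\tau\|_F^2\le dR$. Since $\E\tau=I_d$, this yields
\[
S^2(X_1):=\E\|\tau(X_1)-I_d\|_F^2\le d(R-1).
\]

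\emph{Step 3: tensorization along the sum.} Put
\[
\tau_n(z):=\E\Bigl[\frac1n\sum_{i=1}^n\tau(X_i)\,\Big|\,Z_n=z\Bigr].
\]
Conditioning the per-summand Stein identities on $Z_n$ shows that $\tau_n$ is a Stein kernel for $Z_n$. Conditional Jensen, independence, and $\E[\tau(X_i)-I_d]=0$ then give
\[
S^2(Z_n)\le \E\Bigl\|\frac1n\sum_{i=1}^n(\tau(X_i)-I_d)\Bigr\|_F^2=\frac{S^2(X_1)}{n}\le\frac{d(R-1)}{n}.
\]

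\emph{Step 4: from Stein discrepancy to $W_2$.} Finally I invoke the Ledoux--Nourdin--Peccati bound $W_2(Y,Z)\le S(Y)$, valid for any $Y$ admitting a Stein kernel, applied to $Y=Z_n$. The proof runs along the OU flow $Y(t)=e^{-t}Y+\sqrt{1-e^{-2t}}Z$. One bounds the $L^2$ norm of the velocity field by $\frac{e^{-2t}}{\sqrt{1-e^{-2t}}}\bigl(\E\|\tau-I_d\|_F^2\bigr)^{1/2}$ via the kernel identity, and then integrates in $t$ using the Benamou--Brenier/Otto calculus. Combined with Step 3, this proves the lemma.

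\emph{Main obstacle.} The delicate points are in Step 1 and Step 4. In Step 1 one must justify the existence and regularity of the $v_j$, which needs an approximation argument for a general density with a spectral gap. In Step 4 one must check that the LNP argument does not require $\tau$ to be symmetric; the kernel above is $\nabla v_j$ stacked and need not be symmetric. I would handle the latter by checking that the OU velocity field only uses the identity $\E[X\varphi^\top]=\E[\tau\nabla\varphi^\top]$, without any symmetry, and I would handle the former by smoothing $p$ with a small Gaussian, which does not increase $R$ by Lemma \ref{lem:CP-OU}, and then passing to the limit.
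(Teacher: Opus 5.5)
Your proposal is correct and is essentially the argument of \cite{CFP19} that the paper cites without proof. It has three ingredients: a variationally constructed Stein kernel with $\E\|\tau\|_F^2\le dR$, hence discrepancy at most $d(R-1)$; the $1/n$ reduction of the discrepancy along normalized sums via $\tau_n=\E[\frac1n\sum_i\tau(X_i)\mid Z_n]$; and the Ledoux--Nourdin--Peccati bound $W_2\le S$, which, as you suspected, does not need $\tau$ to be symmetric. The smoothing step you plan is harmless but unnecessary, because the weak (Riesz-representation) formulation of $-Lv_j=x_j$ already yields the Stein identity and the bound $\E|\nabla v_j|^2\le R$ without any regularity of $p$.
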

	\begin{proof}[Proof of  Theorem \ref{thm-1}]
	For $t\ge0$, let
	\[
	X_i(t):=e^{-t}X_i+\sqrt{1-e^{-2t}}Z_i,\qquad i=1,\dots,n,
	\]
	where $Z_i\sim\mathcal N(0,I_d)$ are i.i.d. and independent of
	$\{X_i\}$.  Set
	\[
	Z_n(t):=\frac1{\sqrt n}\sum_{i=1}^n X_i(t).
	\]
	Since $\Cov(X_1)=I_d$, we have $\Cov(X_1(t))=\Cov(Z_n(t))=I_d$ for all $t\ge0$.  By
	Lemma \ref{lem:CP-OU}, $X_1(t)$ satisfies the Poincar\'e inequality
with  Poincar\'e constant
	\[
	C_P(X_1(t))\le C_P(X_1)=R,\qquad t\ge0.
	\]
	Applying Proposition \ref{Prop-FI} to 
	$X_1(t),\dots,X_n(t)$, we obtain that
\begin{equation}\label{eq:J-pointwise-t}
		J(Z_n(t))\le \frac{2dR}{2dR+(n-1)}J(X_1(t)).
	\end{equation}
	Letting $t\to\infty$ in Lemma \ref{lem:debruijn} yields
	\[
	\Ent(X_1\mid Z)=\int_0^\infty J(X_1(s))\dd s,
	\qquad
	\Ent(Z_n\mid Z)=\int_0^\infty J(Z_n(s))\dd s.
	\]
	Combining with
	\eqref{eq:J-pointwise-t}, we get
	\begin{equation}\label{ent-1}
    \begin{aligned}
        \Ent(Z_n\mid Z)
	=\int_0^\infty J(Z_n(t))\dd t
	&\le \frac{2dR}{2dR+(n-1)}\int_0^\infty J(X_1(t))\dd t
	\\&= \frac{2dR}{2dR+(n-1)}\Ent(X_1\mid Z).
    \end{aligned}
	\end{equation}
 
 For $t>0$, applying \eqref{eq:Ent-W2-FI-intro} with $X=Z_n$ gives
			\[
			\Ent(Z_n\mid Z)
	\le \frac{1}{2(e^{2t}-1)}W_2^2(Z_n,Z)
			+ \frac{1-e^{-2t}}{2}J(Z_n).
			\]
	We consider	\[
		\Phi(t)
		:=\frac{1}{2(e^{2t}-1)}W_2^2(Z_n,Z)
		+ \frac{1-e^{-2t}}{2}J(Z_n),\quad t\in(0,\infty).
		\]
	Since $W_2^2(Z_n,Z)\le J(Z_n)$ (\cite{Tal96, Gro75}), optimizing $	\Phi(t)$ on $t\in (0,\infty)$ yields
	\[
\inf _{ t\in(0,\infty)}	\Phi(t)=W_2(Z_n,Z)\sqrt{J(Z_n)}-\frac12 W_2^2(Z_n,Z).	\] 
		This implies the HWI inequality (cf.\ \cite[Theorem 3]{OV00})
		\[
		\Ent(Z_n\mid Z)
		\le W_2(Z_n,Z)\sqrt{J(Z_n)}-\frac12 W_2^2(Z_n,Z)\leq W_2(Z_n,Z)\sqrt{J(Z_n)}.
			\]
			Combining with Lemma \ref{lem:W2-Poincare}
		and Proposition \ref{Prop-FI}, we obtain
			\begin{equation}\label{ent-2}
			\Ent(Z_n\mid Z)
		\le \sqrt{\frac{d(R-1)}{n}\cdot
			\frac{2dR}{2dR+(n-1)}J(X_1)}.    \end{equation}
		Theorem \ref{thm-1} follows from \eqref{ent-1} and \eqref{ent-2}.
		\end{proof}

\noindent
{\bf Acknowledgement}:
C.-S.\ Deng is supported by  the National Natural Science Foundation of China (12371149). L.\ Xu is supported by the Science and Technology Development Fund (FDCT) of Macau S.A.R. FDCT 0074/2023/RIA2.


\end{document}